\theoremstyle{plain}
\newtheorem{theorem}{Theorem}
\newtheorem{prop}{Proposition}
\newtheorem{lemma}{Lemma}  
\newtheorem{cor}{Corollary} 
\newtheorem{definition}{Definition}
\newcommand{\R }{\mathbb R}
 \newcommand{\ga}{\gamma}
 \newcommand{\ep}{\epsilon}
\newcommand{\E }{\mathbb E}
\newcommand{\C }{\bf   C}
\DeclareMathOperator{\trace}{trace}
\DeclareMathOperator{\diag}{diag}
\def\g{\gamma}
\def\fr{\frac}
\def\into{\rightarrow}
\def\m#1{\begin{bmatrix}#1\end{bmatrix}}
\def\m#1{\begin{bmatrix}#1\end{bmatrix}}
\def\a{\alpha}
\def\b{\beta}
\def\g{\gamma}
\def\w{\omega}
\def\l{\lambda}
\def\e{\epsilon}
\def\d{\delta}
\newcommand{\fui}{\varphi}
  \newcommand{\af}{\alpha}
  \newcommand{\be}{\beta}
\newcommand{\mS}{\mathbb S}
\newcommand{\graf}{\operatorname{graph}}
\def\nablam{{\nabla_{\! m}}}
\def\massmetric#1#2{{\langle #1 , #2 \rangle_m}}
\begin{document}

\title{Free time minimizers for the planar three-body problem}

\title[Free time minimizers]{Free time minimizers for the planar three-body problem}
\author{Richard Moeckel}
\address{School of Mathematics\\ University of Minnesota\\ Minneapolis MN 55455}
\email{rick@math.umn.edu}

\author{Richard Montgomery}
\address{Dept. of Mathematics\\ University of California, Santa Cruz\\ Santa Cruz CA}
\email{rmont@ucsc.edu}

\author{H\'ector S\'anchez Morgado}
\address{Instituto de Matem\'aticas\\ Universidad Nacional Aut\'onoma
  de M\'exico\\ Ciudad de M\'exico C. P. 04510}
\email{hector@matem.unam.mx}

\date{May 1, 2017}

\keywords{Celestial mechanics, three-body problem, free time
  minimizer, central configuration}

\subjclass[2000]{70F10, 70F15, 37N05, 70G40, 70G60}

\begin{abstract}  
 Free time minimizers of the action  (called``semi-static'' solutions by Ma\~ne)
play a central role in the theory of weak KAM solutions to the Hamilton-Jacobi equation \cite{Fathi}.
We prove that any solution to Newton's  three-body problem which is 
asymptotic to Lagrange's parabolic homothetic solution is eventually a  free time minimizer.
Conversely, we prove that every free time minimizer tends to Lagrange's solution,
provided the mass ratios lie in a certain  large open set of
mass ratios. We were inspired by the work of   \cite{DM} who  had
shown that every   free time minimizer for the N-body problem is
parabolic, and therefore must be  asymptotic to the set of central configurations.  
We exclude being asymptotic to Euler's central configurations by a
second variation argument.  
Central configurations correspond to rest points for the McGehee blown-up dynamics. 
The   large open set of mass ratios are those for which   the linearized
dynamics at each    Euler rest point   has a complex eigenvalue.
\end{abstract}
\maketitle

\section{Introduction and Results.}

Lagrange's parabolic  homothetic solution (figure \ref{fig_Lag}) to the three-body problem consists of an
equilateral triangle  expanding at   the rate $t^{2/3}$. Analytically such a solution is  described by
  equation (\ref{cc}) below  with the  $c$  there  denoting the locations of  the vertices of an equilateral  triangle.

Newton's equations are   Euler-Lagrange equations for a well-known action.
Recently, Maderna and Venturelli \cite{MadernaV} proved that the
 parabolic homothetic Lagrange solution (figure \ref{fig_Lag}) is a 
  very strong type of action minimizer called a 
 ``  free time minimizer''  
 (or  ``semi-static'' by Ma\~ne \cite{Mane}) for this action.
 These minimizers are    objects of    central 
  interest in the recently developed   theory of weak KAM solutions
 \cite{Fathi}. 
 
   \begin{figure}[h]
\scalebox{0.4}{\includegraphics{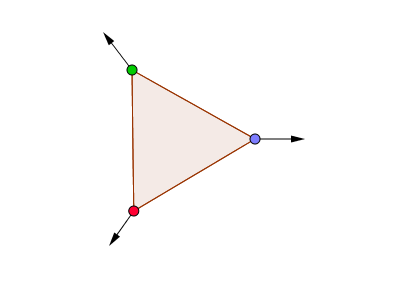}}
\caption{The Lagrange parabolic homothetic solution: an equilateral triangle expanding at the rate $t^{2/3}$.}
\label{fig_Lag}
\end{figure}

The 
 parabolic  homothetic central configuration solutions, of which Lagrange's solution is an example, are given by :
 \begin{equation}
  \label{cc}
  \ga_c(t)= \alpha c  t^{2/3},  
  c \text{ a central configuration, } \alpha = \alpha(c) \in \R  
  \end{equation} 
Central configurations  are  special configurations
which play an important role in the three-body problem.
Modulo  rigid motions and  scaling there are exactly five central configurations for 
 any given mass ratio. (See equation   \eqref{ccc} in section \ref{setup}  for   the defining equation of central configurations.)   Two are the  Lagrange   equilateral triangle configurations just described.  The Lagrange central configurations  count as two because there are two orientation types  of labelled equilateral triangles, related to each other by  reflection but not by rigid motion.  The
 remaining three central configurations are   the  Euler configurations  for which   the   three bodies lie on a line.   
 We index the Euler  configurations   by which   body lies   between the other two.

As time $t\to \infty$ in equation~(\ref{cc}) the velocity of each  body tends to zero.
Solutions with this property are called ``parabolic''.   
Although Chazy (\cite[chapter III]{Chazy}) did not use this definition, his results show that any  parabolic solution $\gamma(t)$  asymptotes
to  one of the  
$\ga_c$ of equation~(\ref{cc}) in the sense that
\begin{equation}
\label{asymptote}
\lim_{t \to \infty} t^{-2/3} \gamma(t)  = c.
\end{equation}
(See subsection \ref{parabolics} for a proof of an equivalent statement.)

As stated above, the   Lagrange solution is a FTM -- a  free time minimizer, and it  is parabolic.
 Da Luz and Maderna
\cite{DM} proved that every FTM is  parabolic.
These  results beg us to ask three questions.  Are the Euler parabolic homothetic  solutions FTMs?  
 To which of the five types  of central configurations may a FTM
  be asymptotic?  
Among the parabolic solutions, which ones are FTMs? 
We give a complete answer to these questions   for a large range of masses which we call the ``spiraling range'',
 depicted in figure~\ref{fig_masses}.  See subsection \ref{spiral_range} and    definition \ref{spiraling} for details on this range of masses.

 \begin{theorem}  
 \label{main}  In the spiraling range of mass ratios, every   free time minimizer 
$\gamma(t)$ for the planar  three-body problem tends to some Lagrange  configuration:  the limit $c$ of equation (\ref{asymptote}) is an   equilateral triangle.  Equivalently, these orbits lie in the stable manifold of one of the
Lagrange restpoints at infinity, as described in subsections \ref{spiral_range} and  \ref{parabolics}. 
\end{theorem}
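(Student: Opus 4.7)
The plan is a three-step reduction. First, by the theorem of Da Luz and Maderna \cite{DM}, every free time minimizer $\gamma(t)$ is parabolic. Second, by Chazy's asymptotic result \eqref{asymptote}, any parabolic solution satisfies $\lim_{t\to\infty} t^{-2/3}\gamma(t) = c$ for some central configuration $c$. Since, up to rigid motion and scaling, the only planar three-body central configurations are the two Lagrange equilateral triangles and the three Euler collinear configurations, the theorem reduces to proving that in the spiraling range no FTM is asymptotic to any Euler configuration.

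To exclude the Euler case, I would argue by contradiction in the McGehee blown-up coordinates, where an Euler-asymptotic trajectory corresponds to an orbit in the stable manifold $W^s(p_E)$ of the associated Euler rest point $p_E$. By the defining property of the spiraling range, the linearization at $p_E$ carries a non-real complex eigenvalue pair, so orbits in $W^s(p_E)$ spiral into $p_E$: in a suitable rotating frame the angular coordinate of $\gamma$ oscillates without bound as $t\to\infty$. Consequently one can find arbitrarily large $T_1<T_2$ for which $\gamma(T_1)$ and $\gamma(T_2)$ sit on opposite sides of the collinear Euler axis. Between these endpoints I would construct a comparison curve $\eta$ that cuts directly across the axis rather than winding around it, and then, after optimizing over its travel time, show that the free time action $A(\eta)$ is strictly less than $\int_{T_1}^{T_2} L(\gamma,\dot\gamma)\,dt$. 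This would contradict the FTM property of $\gamma$ on $[T_1,T_2]$.

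The crux, and the main obstacle, is producing a direction in which the second variation of the action along the Euler homothetic solution is negative on sufficiently long intervals. The natural candidate is the rotational mode selected by the complex eigenvalue at $p_E$: the linearized Jacobi equation in McGehee time should decouple into a radial and an angular piece, with the angular piece oscillatory precisely when the eigenvalue has nonzero imaginary part. A Sturm-type argument then forces conjugate points along the angular mode, yielding negative index and hence the existence of a nearby path of strictly lower action. Converting this infinitesimal gain into an honest, admissible comparison curve $\eta$ — collision-free, with the correct endpoints, and with optimized free-time reparametrization — is the delicate technical step, since the rest point $p_E$ is degenerate in the original time (eigenvalues are of order $1/t$) and one must translate an estimate in the blown-up variational problem back to an estimate in the original Lagrangian action.
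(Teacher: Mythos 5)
Your overall reduction coincides with the paper's: Da Luz--Maderna gives parabolicity, every parabolic orbit converges to a central configuration, and the entire content of the theorem is excluding the three Euler configurations. You have also correctly identified the key mechanism: in the direction transverse to the collinear axis the Jacobi (second-variation) equation along the Euler homothetic solution becomes oscillatory exactly when the rest-point eigenvalues are non-real, forcing conjugate points and a negative second variation. This is precisely the paper's Lemma~\ref{main-lemma}(A): the variation $\rho(t)(c+\ep\fui(t)z)$, with $z$ the eigenvector of $D\tilde\nabla U(c)$ of eigenvalue $-U(c)\nu$, yields the quadratic form $Q(\fui;a,b)=\int_a^b\bigl[\rho^2\dot\fui^2-U(c)\nu\,\rho^{-1}\fui^2\bigr]dt$, whose Euler--Lagrange equation is $t^2y''+\tfrac43 ty'+\tfrac29\nu y=0$; disconjugacy fails exactly when the indicial roots are complex, i.e.\ when $\nu>\tfrac18$, which is the spiraling condition of Proposition~\ref{prop_Eulereevals}.

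There are, however, two genuine gaps. First, the geometric wrapper you place around this idea --- the orbit spirals, hence crosses the collinear axis at times $T_1<T_2$, hence one can shortcut across --- neither covers all cases nor is needed. The Euler homothetic solution itself sits at the rest point of the $(s,z)$ dynamics and never leaves the axis, and there are further Euler-asymptotic parabolic orbits (e.g.\ orbits of the collinear subproblem, and more generally orbits in the strong stable manifold whose components along the complex eigendirections vanish) for which your $T_1,T_2$ do not exist. Moreover, the step you flag as delicate is actually immediate: once the second variation of $\gamma|_{[a,b]}$ is negative on some $v$ vanishing at the endpoints, the curves $\gamma^\ep$ of that variation family already share endpoints, live on the same fixed time interval, are collision-free for small $\ep$, and have strictly smaller action, contradicting Definition~\ref{defMin1} directly --- no free-time optimization and no axis-crossing comparison curve are required. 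Second, the step that genuinely does need work, and which you omit, is passing from the homothetic solution to an arbitrary parabolic orbit $\rho(t)(c+\be(t))$ asymptotic to the Euler configuration (the paper's Lemma~\ref{main-lemma}(B)): one must show the negativity of $Q$ survives the error term $D^2U(c+\be(t))-D^2U(c)$. The paper does this by combining the stable-manifold decay rate $\be(t)=O(t^{-d})$, $d>0$, with the rescaled test functions $\fui_\l(t)=\fui_1(t/\l)$, for which the main term scales like $\l^{1/3}Q(\fui_1;a,b)<0$ while the error contributes only $O(\l^{1/3-d})$, so the second variation is negative for $\l$ large. Without this step the theorem is proved only for the homothetic Euler solutions, not for all free time minimizers.
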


\begin{figure}[h]
\scalebox{0.4}{\includegraphics{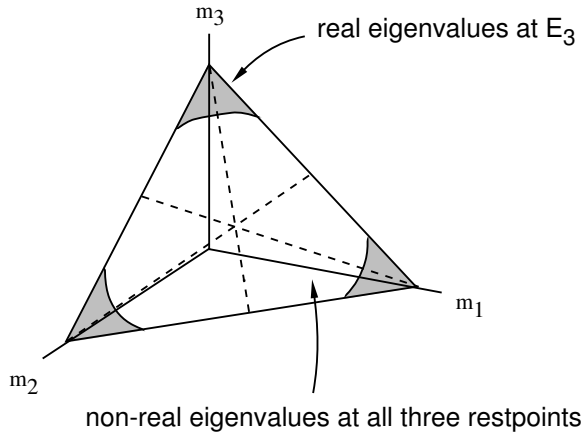}}
\caption{The spiraling range of mass ratios,  depicted  in the mass simplex $m_1+m_2+m_3=1$.   If $E_i$ denotes the collinear central configuration with mass $m_i$ in the middle, then the mass for which spiraling does not occur occupies the  small shaded region near the corresponding vertex, a region in which where  $m_i$ is much larger than the other masses.  The spiraling range, where all three central configurations have nonreal eigenvalues, is represented by the large unshaded region of the simplex. The figure is based on results in \cite{RobinsonSaari}.
}
\label{fig_masses}
\end{figure}

We  have the following converse to this theorem,  valid for all mass ratios:

\begin{theorem}
\label{converse}
Every parabolic solution  $\gamma(t), 0 \le t < \infty $  asymptotic to a  Lagrange  configuration
is a  free time minimizer upon its restriction to a sub-interval $T \le t <  \infty$, 
$T$ large enough. 
\end{theorem}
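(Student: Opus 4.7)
The plan is to use the Maderna-Venturelli theorem \cite{MadernaV} (that $\gamma_c$ is itself a free time minimizer) as a comparison tool and to transfer the FTM property to any parabolic $\gamma$ asymptotic to $\gamma_c$ via a shadowing/bridging argument in the action.

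Fix $T$ large, let $T \le a < b$, and let $\mu$ be any competitor from $\gamma(a)$ to $\gamma(b)$ in arbitrary positive time. To show $A(\gamma|_{[a,b]}) \le A(\mu)$, introduce short bridging arcs $\beta_- : \gamma_c(a) \to \gamma(a)$ and $\beta_\star : \gamma(T_\star) \to \gamma_c(T_\star)$ for a large parameter $T_\star > b$, and form the two free-time curves from $\gamma_c(0)$ to $\gamma_c(T_\star)$
\begin{equation*}
\Gamma_\mu := \gamma_c|_{[0,a]} * \beta_- * \mu * \gamma|_{[b,T_\star]} * \beta_\star, \qquad
\Gamma_\gamma := \gamma_c|_{[0,a]} * \beta_- * \gamma|_{[a,T_\star]} * \beta_\star.
\end{equation*}
Applied to $\Gamma_\mu$, the minimality of $\gamma_c$ gives $A(\Gamma_\mu) \ge A(\gamma_c|_{[0,T_\star]})$, which rearranges to
\begin{equation*}
A(\mu) \ge A(\gamma_c|_{[a,T_\star]}) - A(\beta_-) - A(\gamma|_{[b,T_\star]}) - A(\beta_\star).
\end{equation*}
The crux is the matching near-equality for $\Gamma_\gamma$:
\begin{equation*}
A(\beta_-) + A(\gamma|_{[a,T_\star]}) + A(\beta_\star) = A(\gamma_c|_{[a,T_\star]}) + \eta(T, T_\star),
\end{equation*}
with $\eta \to 0$ as $T_\star \to \infty$ and then $T \to \infty$. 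Substituting into the previous lower bound for $A(\mu)$ yields $A(\mu) \ge A(\gamma|_{[a,b]}) - \eta$, and since $\eta$ can be made arbitrarily small by enlarging $T$, one concludes $A(\mu) \ge A(\gamma|_{[a,b]})$, establishing the FTM property.

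The main obstacle is this near-equality. A first-variation expansion at the critical curve $\gamma_c$, using that $\gamma$ also satisfies Newton's equations, transforms $A(\gamma|_{[a,T_\star]}) - A(\gamma_c|_{[a,T_\star]})$ into a boundary contribution of the form $\dot\gamma_c \cdot (\gamma - \gamma_c)\big|_a^{T_\star}$ together with second-order corrections in the perturbation $\delta := \gamma - \gamma_c$. Because $\gamma$ lies on the stable manifold of the Lagrange rest point in McGehee's blow-up (subsection \ref{parabolics}), $\delta(t)$ decays in physical time at a rate controlled by the stable eigenvalues there, so the boundary term at $T_\star$ vanishes as $T_\star \to \infty$. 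The bridge actions $A(\beta_-)$ and $A(\beta_\star)$, chosen to approximately realize the minimal action between their endpoints, scale like $|\delta(\cdot)|\sqrt{U(\gamma_c(\cdot))}$ at the corresponding times and likewise tend to zero as $T$ and $T_\star$ go to infinity. Combining these controls yields $\eta \to 0$ and closes the argument.
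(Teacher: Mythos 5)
There is a genuine gap at the final step: minimization is an exact inequality, but your argument only delivers an approximate one. For a fixed competitor $\mu$ joining $\ga(a)$ to $\ga(b)$, the error $\eta$ contains the bridge action $A(\beta_-)$, which is bounded below by (essentially) $d_{JM}(\ga_c(a),\ga(a))>0$; this is a fixed positive quantity once $a$ is fixed and does not shrink as $T_\star\to\infty$. Nor can you then ``enlarge $T$'': the competitor lives on $[a,b]$ with $a\ge T$ already chosen, so what you actually obtain is $A(\mu)\ge A(\ga|_{[a,b]})-\eta_\infty(a)$ with $\eta_\infty(a)>0$ for every finite $a$. That is an ``asymptotically almost minimizing'' statement, not the free time minimizer property. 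The near-equality step is also shaky on its own terms: the first-variation expansion around $\ga_c$ leaves an integrated second-order term over $[a,T_\star]$ which converges to a generically nonzero constant rather than to $0$, and $\delta=\ga-\ga_c=\rho(t)\beta(t)$ need not even be small, since $\rho(t)\sim t^{2/3}$ grows and only the normalized difference $\beta(t)$ is known to decay.

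The paper closes exactly this gap by taking the limit $T_\star\to\infty$ \emph{before} making any comparison: it uses the Busemann function $B_c$ of $\ga_c$, which by Percino--S\'anchez (Proposition~\ref{Buseman}) is a weak KAM solution, so that curves \emph{calibrated} by $B_c$ are exact free time minimizers with no error term. The remaining---and essential---point is then to show that your particular $\ga$, and not merely some other parabolic orbit asymptotic to $c$, is calibrated. This is where Lemma~\ref{Lag_thm} enters: near the Lagrange restpoint at infinity the stable manifold is a Lagrangian graph, it coincides with $\graf dB_c$ over a neighborhood of $c$ at infinity (Corollary~\ref{Busemann}), and uniqueness of solutions of $\dot z^*=dB_c(z)$ identifies $\ga|_{[T,\infty)}$ with the calibrated curve through $\ga(T)$. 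Your proposal has no substitute for this identification step, and without it (or some other mechanism making the comparison exact) the argument cannot conclude.
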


 \subsection{McGehee blow-up and  the spiraling range}
 \label{spiral_range}
 The   spiraling range  condition of theorem 1 
  first arose in Siegel's study of the triple collision singularity \cite{Siegel, SM}.   At a triple collision in backward time with $t\into 0^+$, the three mutual distances tend to  {\em zero} at the same $t^{2/3}$ rate.  (In fact the homothetic solutions (\ref{cc}) exhibit this behavior. ) After rescaling by $t^{2/3}$, these solutions also converge to central configurations.   
  Certain eigenvalues computed by Siegel  determine the rate of convergence and behavior of nearby noncollision orbits.
    For the case of an Eulerian central configuration, these eigenvalues are often complex which produces an oscillation.
   
In 1974, McGehee revolutionized  celestial mechanics by  
    a change of variables which partially compactifies
 phase space,  adding a   ``collision manifold'' \cite{McGehee, McGehee2}.   
 (We review McGehee's coordinates in subsection \ref{subsec:McGehee} below.)
Before compactification the flow has  no fixed points.  
After compactification the flow  admits  fixed points on the  collision manifold, indeed a pair of fixed points
for each central configuration, so 10 fixed points in all, modulo rotation.   Orbits converging to triple collision in forward or backward time 
now constitute the stable and unstable manifolds of these restpoints.  The eigenvalues of these restpoints are the same, up to scaling, as those found by Siegel.

A variation on McGehee's method allows us to study parabolic orbits in a nearly identical way.  We get a  flow at infinity
  identical to that of the collision manifold flow.  The  
  restpoints -- still parameterized by  the   central configurations,  are now interpreted in terms of parabolic solutions.
  The   eigenvalues of the linearized flow  at these restpoints are those computed by Siegel.   
  Their    stable   manifolds are comprised  of the parabolic orbits.   
For  the Euler central configurations  there is an open connected range of  
mass ratios for which we get 
eigenvalues with nonzero imaginary parts.  
We will  call this open set of mass ratios  the 
``spiraling range'' of mass ratios (see figure~\ref{fig_masses}.
See definition \ref{spiraling}.

{\bf Remark.} Following McGehee's work, several authors, including one of us, have used the oscillatory phenomenon  near triple collision 
to prove existence of interesting near-collision orbits \cite{Devaney, Simo, Moeckel1,Moeckel2,Moeckel3,Moeckel4}.   In these it was discovered that  if the mass ratios are  in the  spiraling range  
then certain  complicated chaotic behaviors, described by symbolic dynamics, are guaranteed to 
occur  for the corresponding three-body  problem.   

\section{The key lemmas}
\bigskip 
\subsection{For theorem 1} In trying to understand    free time minimizers, it is natural to consider 
the second  variation  of the action along a 
solution segment.
If this second variation is   negative  
then   we call the corresponding solution ``variationaly unstable''.  
The action of a variational unstable solution can  be decreased by  deforming  the solution curve 
in the direction of the negative variation, so such solution  curves cannot be  free-time minimizers.
Thus,  Theorem~\ref{main} is an immediate corollary of
\begin{lemma}\label{main-lemma}
  \begin{enumerate}[(A)]
  \item 
  The Euler parabolic homothety solution is   variationaly unstable if and only
if the mass ratio associated to that Euler central configuration is  spiraling (definition \ref{spiraling}) . 
\item 
For these same mass ratios,   any parabolic solution asymptotic to that Euler parabolic homothety solution
 is variationaly unstable.
  \end{enumerate}
\end{lemma}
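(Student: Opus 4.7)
The strategy is to convert variational instability of $\ga$ into the existence of a conjugate pair: times $t_1 < t_2$ for which a nontrivial Jacobi field vanishes at both endpoints. Standard index-form theory then produces, for any $T > t_2$, a compactly supported variation on $[t_1, T]$ along which the second variation of the action is strictly negative. Both (A) and (B) thereby reduce to an analysis of the Jacobi equation
\[
   M\ddot\xi \;=\; D^2 U(\ga(t))\,\xi
\]
on the translation- and rotation-reduced configuration space, with $M$ the mass matrix and $U$ Newton's potential.

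For part (A), the homothety $\ga_c(t) = \a\,c\,t^{2/3}$ and the $(-1)$-homogeneity of $U$ give $D^2 U(\ga_c(t)) = t^{-2}\,D^2 U(\a c)$, so the Jacobi equation becomes the Euler--Cauchy system
\[
   \ddot\xi \;=\; t^{-2}\,B\,\xi, \qquad B := M^{-1}\,D^2 U(\a c).
\]
After quotienting off the trivial directions (infinitesimal translations, rotation, and the homothetic direction itself) I would diagonalize $B$ on the remaining invariant subspace; each eigenvalue $\mu$ produces scaling exponents $\lam_\pm = \tfrac12\bigl(1 \pm \sqrt{1+4\mu}\bigr)$. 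The time change $s = \ln t$, $\xi = t^{1/2}\eta$ conjugates the Jacobi equation to the autonomous linear system $\eta'' = (B + \tfrac14 I)\eta$, which matches (up to a common rescaling) the McGehee linearization at the corresponding Euler restpoint. This identifies the spiraling condition of Definition~\ref{spiraling} with the existence of an eigenvalue $\mu < -\tfrac14$ of $B$, equivalently a complex pair $\lam_\pm = \tfrac12 \pm i\w$ with $\w > 0$. In that case the Jacobi field $\xi(t) = v\,t^{1/2}\sin\!\bigl(\w\ln(t/t_1)\bigr)$ vanishes at both $t_1$ and $t_2 = t_1\,e^{\pi/\w}$, producing the desired conjugate pair. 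Conversely, when every such $\mu$ satisfies $\mu \ge -\tfrac14$ the exponents are real, the Jacobi fields are linear combinations $A\,t^{\lam_+} + B\,t^{\lam_-}$, and by direct inspection (or Sturm comparison with a nonoscillatory Euler equation) each admits at most one zero on $(0,\infty)$, so no conjugate pair exists.

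For part (B), let $\ga$ be any parabolic solution with $t^{-2/3}\ga(t) \to \a c$. Applied to the Jacobi equation along $\ga$, the same substitution produces
\[
   \eta'' \;=\; (B + \tfrac14 I)\,\eta + R(s)\,\eta,
\]
with remainder $R(s) = M^{-1}\bigl[\,t^2 D^2 U(\ga(t)) - D^2 U(\a c)\,\bigr] \to 0$ as $s \to \infty$, using \eqref{asymptote}, the homogeneity of $U$, and continuity of $D^2 U$ off the collision set. On the spiraling subspace the unperturbed matrix $B + \tfrac14 I$ has purely imaginary eigenvalues $\pm i\w$, so its fundamental matrix is bounded and oscillates with period $2\pi/\w$ in $s$. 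A Levinson-type asymptotic theorem then furnishes a basis of solutions of the perturbed equation asymptotic, as $s \to \infty$, to those of the unperturbed one; in particular some Jacobi field along $\ga$ has arbitrarily late pairs of zeros $s_1 < s_2$. Translating back to $t$ yields the required conjugate pair for $\ga$, and the compactly supported variation with negative second variation is built as in (A).

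The main obstacle I anticipate is the careful gauge reduction before diagonalizing $B$: one must first quotient by the Galilean symmetries and by the homothetic direction, so that the eigenvalue $\mu$ that one reads off is precisely the one Siegel/McGehee identifies with the spiraling behavior at the Euler restpoint. Once that identification is made, (A) becomes an elementary Euler--Cauchy calculation, and (B) reduces to a textbook application of asymptotic ODE theory to an oscillatory linear system.
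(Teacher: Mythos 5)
Your part (A) is essentially the paper's own argument in different clothing: the paper restricts the second variation to variations $\rho(t)\varphi(t)z$ along the negative eigendirection $z$ of $D\tilde\nabla U(c)$, obtains the scalar Euler equation $t^2y''+\tfrac43 ty'+\tfrac29\nu y=0$, and invokes disconjugacy, which is exactly your conjugate-point criterion; your exponents $\lambda_\pm=\tfrac12(1\pm\sqrt{1+4\mu})$ with $\mu<-\tfrac14$ reproduce the condition $\nu>\tfrac18$. Two things you must still supply to close (A): the actual identification of the relevant eigenvalue of $B$ with $-\tfrac{2(1+\nu)}{9}$ (the paper does this via Proposition~\ref{prop_Eulereevals} and the appendix), and, for the ``only if'' direction, the check that \emph{all} remaining eigendirections are nonoscillatory --- rotation gives $\mu=-\tfrac29>-\tfrac14$, the radial direction $\mu=\tfrac49$, and the positive eigenvalue $\mu=\tfrac{4(1+\nu)}{9}$ --- so that the full index form, not just one eigendirection, is nonnegative in the non-spiraling case.

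Part (B) has a genuine gap. First, Levinson's theorem requires $\int^\infty \lVert R(s)\rVert\,ds<\infty$, and you only assert $R(s)\to0$ from \eqref{asymptote}; the quantitative rate $|\beta(t)|=O(t^{-d})$ with $d>0$ (hence $\lVert R(s)\rVert=O(e^{-ds})$ in $s=\ln t$) has to be extracted from the stable manifold theorem at the Euler restpoint at infinity, as the paper does. More seriously, even granting Levinson, the conclusion that ``some Jacobi field along $\gamma$ has arbitrarily late pairs of zeros'' does not follow: the perturbation $R(s)$ couples the oscillatory eigendirection to the other reduced directions, so a real solution of the perturbed system has the form $\cos(\omega s)\,v+o(1)$ where the $o(1)$ error is a vector not proportional to $v$, and such a vector-valued solution need not vanish anywhere. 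Oscillation is robust under integrable perturbations for \emph{scalar} second-order equations, but ``a nontrivial solution vanishing at two points'' is not a robust property of systems, and it is precisely what your conjugate-point machinery needs. The paper sidesteps this entirely: it never produces a conjugate pair for the perturbed orbit, but instead evaluates the index form on the explicit test field $\rho(t)\varphi_\lambda(t)z$ with $\varphi_\lambda(t)=\varphi_1(t/\lambda)$, uses the self-similarity $Q(\varphi_\lambda;\lambda a,\lambda b)=\lambda^{1/3}Q(\varphi_1;a,b)$, and bounds the error contribution by $C\lambda^{1/3-d}$, which is dominated for $\lambda$ large. Your argument is repaired by replacing the Levinson step with this continuity-plus-scaling estimate of the index form on a translated test variation.
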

\noindent
(Part (B) applies to the solutions in the stable manifold of the corresponding Eulerian restpoints at infinity.)

For the proof of lemma \ref{main-lemma}, the interested  reader may skip directly to   section \ref{sec:pfLemma1}.
 This reader may need   to refer back to section 4 for eigenvalue information. 

\subsection{For theorem 2}  The key lemma for Theorem 2 is  \begin{lemma}
\label{Lag_thm}
The stable manifold of a Lagrange restpoint $c$ at infinity is an immersed  Lagrangian
 submanifold which is a smooth embedded graph near $c$.   
\end{lemma}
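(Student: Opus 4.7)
The plan is to combine the stable-manifold theorem, applied to $c$ as a rest point of the smooth McGehee-blown-up flow at infinity, with a symplectic-invariance argument that forces the result to be Lagrangian.

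First I would work in the McGehee coordinates of subsection \ref{subsec:McGehee}, where $c$ is an honest rest point of a smooth extended vector field. From the Lagrange-rest-point eigenvalue computation recalled in Section 4, the linearization at $c$ has $n$ eigenvalues with strictly negative real part, where $2n$ is the dimension of the center-of-mass-reduced symplectic phase space. The classical stable-manifold theorem then produces a smooth embedded local submanifold $W^s_{loc}(c)$ of dimension $n$, tangent at $c$ to the stable eigenspace $E^s$. Inspecting the eigenvectors, $E^s$ projects isomorphically onto the configuration-type coordinates at $c$, which upgrades ``embedded submanifold'' to ``embedded graph'' of momenta over configuration space near $c$. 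Globally,
\[W^s(c) = \bigcup_{t \le 0} \phi_t\bigl(W^s_{loc}(c)\bigr)\]
is an injectively immersed $n$-manifold, and by construction (together with the Da Luz--Maderna parabolic criterion) it consists of exactly the orbits asymptotic to the Lagrange homothety $\ga_c$.

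For the Lagrangian property the key identity is symplectic invariance of the Newtonian Hamiltonian flow: for any $x \in W^s(c)$ and $\xi,\eta \in T_x W^s(c)$,
\[\omega_x(\xi,\eta) = \omega_{\phi_t(x)}\bigl(d\phi_t\xi,\, d\phi_t\eta\bigr) \qquad \text{for all } t\ge 0,\]
and as $t \to \infty$ one has $\phi_t(x) \to c$ while $d\phi_t\xi$ and $d\phi_t\eta$ contract along the stable directions. Since $\dim W^s(c) = n = \frac{1}{2}\dim(\text{phase space})$, isotropic is equivalent to Lagrangian, so it suffices to prove the right-hand side tends to zero.

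The main obstacle is that McGehee's change of variables is not canonical: in the blown-up coordinates $\omega$ has a singularity as one approaches the infinity boundary, so the limit of the display above is an ``$\infty \cdot 0$'' competition. To close the argument one must verify that the exponential contraction rate of $d\phi_t$ on the stable bundle, controlled by the Siegel/McGehee eigenvalues from Section 4, dominates the order of the singularity of $\omega$ at $c$. A cleaner alternative, which avoids the quantitative comparison altogether, is to identify $W^s(c)$ with the graph of $du_c$, where $u_c$ is the weak KAM Busemann-type function built from free time minimizers converging to $\ga_c$ (well-defined near $c$ by Theorem \ref{converse} and the Maderna--Venturelli construction); being the graph of a closed one-form, such a graph is automatically Lagrangian, and its smoothness near $c$ is inherited from the stable-manifold structure above.
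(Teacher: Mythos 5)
Your overall strategy --- stable manifold theorem plus the eigenvalue structure for the graph property, then symplectic invariance plus contraction at the rest point for the Lagrangian property --- is the same as the paper's. The graph part is essentially right: the stable eigenvectors $(1,0,0)$ and $(0,\d s_\a,k_{\a-}\d s_\a)$ project onto the full tangent space of the configuration variables $(u,s)$, and the implicit function theorem finishes it. (You should, however, account for the zero eigenvalue coming from rotational symmetry: $l$ sits on a circle of rest points, and one invokes normal hyperbolicity of that circle to get the four-dimensional stable manifold of the individual rest point.)

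The gap is that you stop exactly where the work is. First, the invariance identity you display is for the Newtonian flow in the original coordinates, where nothing contracts (positions diverge); to get contraction you must pass to the $\tau$-reparametrized blown-up flow, and that flow does \emph{not} preserve the pulled-back two-form: the Lie derivative of $\Omega_u$ along the reparametrized vector field is $df\wedge dH$ with $f=r^{3/2}$, so constancy of $\Omega_u(a,b)$ along orbits holds only for variations tangent to the energy level --- which is why the paper first checks that $W^s_+(l)\subset\{H=0\}$ (all stable eigenvectors except $(1,0,0)$ satisfy $\d H=0$). Second, the ``$\infty\cdot 0$ competition'' you flag is not a routine verification but the heart of the proof: one needs the lower bound $u(\tau)\ge c\,e^{-v\tau}$, hence $u^{-1/2}\le c_1e^{v\tau/2}$ and $u^{-3/2}\le c_2e^{3v\tau/2}$ for the coefficients of $\Omega_u$, against the decay rates $|\d u|\le c_3e^{-v\tau}$ and $|\d s|,|\d z|\le c_4e^{\l_w\tau}$, where the weakest stable eigenvalue satisfies $\l_w=-\frac{v}{4}\bigl(1+\sqrt{13-12\sqrt{k}}\bigr)<-\frac{v}{2}$ precisely because $0\le k<1$ for positive masses. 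Every term of $\Omega_u(a,b)$ is then $O\bigl(e^{(v/2+\l_w)\tau}\bigr)\to 0$; if $\l_w$ were merely negative the $u^{-3/2}$ terms would not be controlled, so this comparison genuinely must be carried out. Finally, your ``cleaner alternative'' via the Busemann function is circular in this paper's logic: Theorem \ref{converse}, the smoothness of $B_c$ near $c$ at infinity, and the identification of $\graf dB_c$ with $W^s_+(l)$ are all \emph{deduced from} Lemma \ref{Lag_thm} (via Corollary \ref{Busemann}); a weak KAM solution is a priori only Lipschitz, and it is the stable-manifold graph that supplies its smoothness, not the reverse.
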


{\bf Remark.} We use McGehee blow-up coordinates (see subsection \ref{subsec:McGehee}) to define  what we mean 
here by ``neighborhood of a restpoint $c$ at infinity'.
In those coordinates , a neighborhood  for the restpoint $c$ at infinity consists of 
the points $(u,c)$ of the form $0 < u < \delta$ ,  $|s -c| < \delta$. 
See  the definition \ref{def:nbhdInfinity} towards the end  of section \ref{sec:Lag}.

The proof of   theorem 2  combines this lemma with the theory of weak KAM
solutions, a theory to be reviewed in  the next section.    The basic  idea is as follows. 
Percino and Sanchez \cite{P-SM} constructed a weak KAM solution  for
each Lagrange parabolic homothetic solution.  Where smooth, the graph
of the differential of this solution forms  a Lagrangian 
manifold. The main point of the proof is that  this Lagrangian
submanifold  agrees with the stable manifold of lemma \ref{Lag_thm} near infinity.  
The full proof of the theorem is found in the last section of this
article, section \ref{sec:Lag}, which also contains the proof of lemma 2.

\section{ Free Time Minimizers, geodesics, and weak KAM.  }
\subsection{Free Time Minimizers and Jacobi-Maupertuis metric}
\label{sec:free-time-minimizers}
We define  free time minimizers.  We show they are minimizing  
geodesic rays  for the zero-energy Jacobi-Maupertuis metric

A {\it ray}  in a Riemannian manifold  is  a  geodesic 
whose domain is a half-line, and whose restriction to any
closed sub-interval of the domain is a minimizing geodesic
{\it between its endpoints}.  

 Ma\~n\'e generalized the notion  of rays and  lines  to Lagrangian dynamics
 defining ``semi-static'' curves. 
In the context of the N-body problem Maderna started calling
 semi-static curves {\it free time minimizers} and we stick to his
 terminology.

Consider the Newtonian N-body potential $U:\E\to]0,\infty]$,
$\E=(\R^d)^N$ given by
\[U(x)=\sum_{i<j}\frac{m_im_j}{r_{ij}}\]
for a configuration $x=(r_1,\ldots,r_N)\in\E$ of $N$
punctual positive masses $m_1,\ldots,m_N$ with $r_{ij}=|r_i-r_j|$,
as well as the Lagrangian $L:\E\times\E\to]0,\infty]$
\[L(x,v)=K(v)+U(x)=\frac 12\sum_{i=1}^Nm_i|v_i|^2+U(x).\]
The action of an absolutely continuous curve $\ga:[a,b]\to\E$ is 
given by
\[A_L(\ga)=\int_a^bL(\ga(t),\dot\ga(t))dt\]
\begin{definition}  
 \label{defMin1}A curve $\gamma_* :[a,b] \to\E$ with domain a closed bounded interval
 is called {\rm free time minimizer} if $A(\gamma_*) \le A(\gamma)$
  for every other curve $\gamma:[ c, d] \to\E$
 sharing its endpoints:  $\gamma_* (a) = \gamma(c),  \gamma_* (b) = \gamma(d)$.
 \end{definition} 
 
 {\bf Note: $d- c \ne b-a$ is allowed in the above definition.} 
 
 \begin{definition}  
  \label{defMin2}
 A curve $\gamma$ with domain an unbounded interval $J$
 (so $J$ is of the form  
 $(-\infty, \infty), [t_0, \infty)$ or $(-\infty, t_0]$)  is a {\rm  free time minimizer}
 if,  for each closed bounded sub-interval  $[a,b] \subset J$ , the restriction $\gamma|_{[a,b]}$ of
 $\gamma$ to $[a,b]$ is a free time minimizer in the sense above (def. \ref{defMin1}). 
 \end{definition}
 
  \begin{prop}[Basic facts regarding free time minimizers]\label{factsFTMs}\quad
\begin{enumerate}[(i)]
 \item If $\gamma$ is a  free time minimizer then its total energy $H=K-U$ is zero at each time. 
 \item  The  free time minimizers are precisely the   minimizing
 geodesics for the zero energy Jacobi-Maupertuis [JM]  metric 
 \begin{equation}
 \label{JM}
 \langle v, v \rangle_q= 4 U(q) K(v)
 \end{equation} on $\E$, with these geodesics reparameterized so as to
 have $0$ energy.   
 \end{enumerate} 
 \end{prop}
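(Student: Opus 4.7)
The plan is to use a Maupertuis-principle argument adapted to the (unusual) sign convention $L=K+U$ used here. The starting point is the pointwise AM-GM inequality
\[
L(q,v) \;=\; K(v) + U(q) \;\ge\; 2\sqrt{K(v)\,U(q)} \;=\; \sqrt{\langle v,v\rangle_q},
\]
with equality precisely when $K(v)=U(q)$, i.e.\ when the total energy $H=K-U$ vanishes. Integrating along any absolutely continuous curve $\gamma$ gives $A_L(\gamma)\ge \ell_{\mathrm{JM}}(\gamma) := \int\sqrt{\langle \dot\gamma,\dot\gamma\rangle_\gamma}\,dt$, with equality iff $\gamma$ has zero energy almost everywhere; crucially the right-hand side is invariant under orientation-preserving reparametrization.

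Next, I would observe that any regular non-collision curve $\eta$ admits a unique orientation-preserving time reparametrization $\tilde\eta$, sharing its image and configuration endpoints, along which $K(\dot{\tilde\eta})\equiv U(\tilde\eta)$; this reparametrization is obtained by solving $dt/d\tau=\sqrt{K(\dot\eta)/U(\eta)}$. By the equality case above, $A_L(\tilde\eta)=\ell_{\mathrm{JM}}(\tilde\eta)=\ell_{\mathrm{JM}}(\eta)$.

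Combining these two steps, both parts of the proposition follow quickly. For (i): given an FTM $\gamma_*$, its zero-energy reparametrization $\widetilde{\gamma_*}$ is an admissible competitor with the same configuration endpoints and $A_L(\widetilde{\gamma_*})=\ell_{\mathrm{JM}}(\gamma_*)\le A_L(\gamma_*)$; by the FTM property equality must hold, and then the equality case of AM-GM forces $H\equiv 0$ along $\gamma_*$. For (ii): an FTM then satisfies $A_L(\gamma_*)=\ell_{\mathrm{JM}}(\gamma_*)$ by (i), so for any competitor $\eta$ between the same endpoints,
\[
\ell_{\mathrm{JM}}(\gamma_*) \;=\; A_L(\gamma_*) \;\le\; A_L(\tilde\eta) \;=\; \ell_{\mathrm{JM}}(\eta),
\]
showing $\gamma_*$ minimizes JM length. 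The converse direction (a zero-energy minimizing JM geodesic is an FTM) follows by reading the same chain of inequalities in the other direction.

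The main technical obstacle lies in the reparametrization step when $K$ vanishes instantaneously or when the curve meets a collision ($U=\infty$). For FTMs neither is serious: instantaneous rest at a noncritical configuration would give $H=-U<0$ at that moment, incompatible with (i), and isolated collisions can be handled by restricting to sub-intervals bounded away from the singular times and passing to a limit using absolute continuity of the action. These technicalities are standard in the $N$-body weak KAM literature (e.g.\ Maderna--Venturelli), and in a plan of this length I would cite them rather than reprove them.
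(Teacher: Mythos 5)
Your proof is correct and rests on the same pointwise inequality $2\sqrt{K U}\le K+U$ (equality iff $H=0$) that the paper uses, but it is genuinely more self-contained in two respects. First, the paper simply cites \cite{DM} for part (i), whereas you derive (i) directly: the zero-energy reparametrization of an FTM is itself an admissible competitor precisely because the minimization is over free time, and comparing its action with that of the original curve forces equality in the AM--GM step, hence $H\equiv 0$. This is a clean observation that makes the whole proposition independent of \cite{DM}. Second, the paper's proof of (ii) compresses the step ``$A(\gamma)\le \ell_{JM}(\eta)$ for every competitor $\eta$'' into one sentence; your explicit reparametrization $\tilde\eta$ with $K=U$ along it is exactly what is needed to justify that comparison, and you also record the converse direction, which the paper leaves implicit. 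Two small blemishes to fix: the reparametrization relation should read $dt/d\tau=\sqrt{U(\eta)/K(\dot\eta)}$ (you wrote the reciprocal), and your remark that instantaneous rest is ``incompatible with (i)'' is circular when invoked inside the proof of (i) --- but it is not actually needed there, since intervals on which $\dot\gamma_*$ vanishes can be deleted outright (again using the freedom in time) and isolated zeros of $K$ are handled by the approximation argument you cite from the weak KAM literature.
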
 
 
 \begin{proof}
 See \cite{DM} for a proof of fact (i).
  We now prove fact (ii).   The   inequality $a^2 + b^2 \ge 2ab$
 with equality if and only if $a = b$ yields
 $$ 2 \sqrt{U} \sqrt{K} \le L = K+ U \text{ with equality iff  }  H = 0.$$
 Consequently, for any  absolutely continuous curve $\gamma: [a, b] \to\E$ we have
 $$\ell_{JM} (\gamma) \le A(\gamma) \text{ with equality iff  } H = 0 \text{ along } \gamma$$
 where $\ell_{JM} (\gamma)$ is the JM length functional.
 Now, if $\gamma$ is a free time minimizer, we have that $H=0$ and so
 the Jacobi length  and action are the same.  Thus $\gamma$ must be a
 minimizing geodesic for the JM metric. 
 \end{proof}
 
 Note that it follows from the proposition that 
 the FTMs with domain $[T_0, \infty)$  are precisely the rays for the zero energy JM metric.

 \subsection{Weak KAM and Hamilton Jacobi}
 \label{subsec:weakKAM}
 
It follows from (ii) of proposition \ref{factsFTMs}   that 
 $$d_{JM} (q, q_0) = \inf_{\gamma: q_0 \to q}  \int_{\gamma} L $$
 is the JM distance between the points $q, q_0 \in Q$. 
  Freezing $q_0$, we get  the function  $f(q) = d_{JM} (q,q_0)$ on $Q$.  
  It  is well known that 
 on a Riemannian manifold the gradient of the  distance function from a point (or a subvariety)  is  a unit vector wherever differentiable.
  Viewed in dual terms, this unit length gradient condition   reads 
 $ \| df (q) \| _{JM} = 1$ where  $\| df (q) \| _{JM} $ is computed relative to the
 dual metric on covectors which is induced by the JM metric. 
 This dual  metric   satisfies   $(\| df \|_{JM})^2  = \frac{ \|d f \|_K ^2 }{ 2 U }$ where the subscript $K$ is the length of a covector
 relative to the dual to the metric $K$.  
 In other words,
 \begin{equation}
   \label{eq:hjm}
\fr12 \| d f \|_K ^2= U,
 \end{equation}
 which is the Hamilton-Jacobi equation. Since
 \[df(q)(v)\le \fr12 \| d f \|_K ^2 + K(v),\]
the relation \eqref{eq:hjm} is equivalent to  
$$df(q)(v) \le L(q,v) \text{ for all } q, v \in\E,$$
 with equality realized for some $v$.
 
 Now set $v = \dot \gamma$ for some curve $\gamma(t), t \ge 0$ and integrate:
 \begin{equation}
 \label{wKAM1}
 f(\gamma(a)) - f (\gamma(0)) \le \int_{\gamma[0,a]} L dt
 \end{equation}
A curve $\gamma:[0,a]\to\E$ is called {\it calibrated by} $f$ if
  \begin{equation}
 \label{wKAM2} 
 f(\gamma(a))-f(x)=\int_{0}^{a} L (\gamma,\dot \gamma) dt  .
 \end{equation}

 \begin{definition} \label{wkam}
A function $f: \E \to \R$ which  satisfies equation \eqref{wKAM1} for
any curve $\ga:[0,a]\to\E$ is called {\rm dominated}. 
If moreover for all $x  \in \E$  there exists a curve   $\gamma:[ 0, \infty) \to \E$
such that $\gamma(0) = x$ and for all $t > 0$, $\ga|[0,t]$
is calibrated by $f$, the function is called  {\rm (forward) weak KAM solution}.

\end{definition} 

 It follows from the definitions that any curve $\gamma$ calibrated
 by a dominated function is a  free time minimizer.  

{\bf Remark 1.}  Any weak KAM solution satisfies the classical Hamilton-Jacobi equation 
at differentiability points. 
(See  Fathi \cite{Fathi} for details and a careful  exposition.)

 {\bf Remark 2.} In a neighborhood of a point where $f$ is smooth, the graph of it  differential
 $df$ forms a smooth Lagrangian graph sitting in the zero energy level of the cotangent bundle.
 This graph is   invariant under the Hamiltonian
 flow and the  solutions to Hamilton's equation foliate the graph.  If we write such a solution
 as $(c(t), p(t))$ then necessarily $c$ is a piece of a calibrating curve, and
 $p(t) = \dot c (t)^*$ is the Legendre transformation of the derivative of that calibrating curve.

{\bf Remark 3.}   There are also backwards weak KAM solution. 
For these, the calibrated curves of equation~(\ref{wKAM2}) are parameterized as 
 $\gamma: (-\infty, 0] \to \E$, and end at $x$:    $\gamma(0) = x$.

\vskip .3cm 
Buseman found a nice way  to construct a weak KAM solution out of 
a  free time minimizer. 
 \begin{definition}
 \label{defBuseman} The Buseman function $B_\ga$
associated to a fixed   free time minimizer $\ga:[0,+\infty)\to \E$ is 
 \[B_\ga(x) 
       =\lim_{t\to+\infty}\left[d_{JM}(\ga(0),\ga(t))-d_{JM}(x,\ga(t))\right]\]
      \end{definition}
 Buseman's  inspiration came from horocycles in hyperbolic geometry.
These horocycles are the level sets of the Buseman function associated to a hyperbolic geodesic $\gamma$.
Percino and Sanchez \cite{P-SM}  re-expressed Buseman's  idea in the present language, as we have just done,  
and were able to prove
\begin{prop}
\label{Buseman} \cite{P-SM} The Busemann function $B_c$ 
associated to any parabolic Lagrange solution $\ga_c$ is
a weak KAM solution.  Moreover,  for any configuration $x$,
the corresponding calibrated curve of equation~(\ref{wKAM2}) will be asymptotic to $c$.
\end{prop}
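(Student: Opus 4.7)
The plan is to verify in turn four things: (a) the limit defining $B_c(x)$ exists and is finite; (b) $B_c$ is dominated; (c) for each $x$ there exists a curve starting at $x$ that is calibrated by $B_c$; and (d) any such calibrated curve is asymptotic to $c$ in the sense of (\ref{asymptote}). Steps (a)--(c) are a direct translation of Busemann's classical argument into the JM setting, but step (d) is the heart of the matter.

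For (a) I would exploit that the Maderna-Venturelli theorem makes $\gamma_c$ a FTM, hence a minimizing JM geodesic ray, so that $d_{JM}(\gamma_c(0),\gamma_c(t)) = d_{JM}(\gamma_c(0),\gamma_c(s)) + d_{JM}(\gamma_c(s),\gamma_c(t))$ for $0<s<t$. Combining this identity with the triangle inequality $d_{JM}(x,\gamma_c(t)) \le d_{JM}(x,\gamma_c(s)) + d_{JM}(\gamma_c(s),\gamma_c(t))$ shows that the function $t \mapsto d_{JM}(\gamma_c(0),\gamma_c(t)) - d_{JM}(x,\gamma_c(t))$ is non-decreasing in $t$, and a second triangle inequality bounds it above by $d_{JM}(\gamma_c(0),x)$; hence the Busemann limit exists. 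For (b) I would apply the triangle inequality to any absolutely continuous $\eta : [0,a] \to \E$: $d_{JM}(\eta(0),\gamma_c(t)) - d_{JM}(\eta(a),\gamma_c(t)) \le d_{JM}(\eta(0),\eta(a)) \le \ell_{JM}(\eta) \le A_L(\eta)$, the last step being the AM--GM inequality already recorded in the proof of Proposition \ref{factsFTMs}. Passing to the limit in $t$ yields domination.

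For (c) I would invoke Maderna's general existence theorem for minimizing FTMs between prescribed endpoints to obtain, for each $n$, a zero-energy FTM $\sigma_n : [0,T_n] \to \E$ from $x$ to $\gamma_c(t_n)$, with $t_n\to\infty$. Marchal's theorem keeps the $\sigma_n$ away from collisions on the interior of each interval, and zero energy pins their speeds to $\sqrt{2U}$, so on each fixed compact sub-interval $[0,b]$ the family $\{\sigma_n|_{[0,b]}\}$ is equicontinuous; a diagonal Arzel\`a-Ascoli extraction produces a limit $\sigma : [0,\infty) \to \E$. To check calibration, I use that each $\sigma_n$ is a minimizing JM geodesic with zero energy, so $A_L(\sigma_n|_{[0,b]}) = d_{JM}(x,\sigma_n(b)) = d_{JM}(x,\gamma_c(t_n)) - d_{JM}(\sigma_n(b),\gamma_c(t_n))$. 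Subtracting $d_{JM}(\gamma_c(0),\gamma_c(t_n))$ in both terms on the right and letting $n\to\infty$ gives $A_L(\sigma|_{[0,b]}) \le B_c(\sigma(b)) - B_c(x)$, while the reverse inequality is just domination; together these yield calibration and hence show that $\sigma$ is itself a FTM with domain $[0,\infty)$.

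The main obstacle is step (d): identifying the specific central configuration toward which $\sigma$ is asymptotic. By construction $\sigma$ is a FTM, so by Da Luz--Maderna it is parabolic, and by Chazy (\ref{asymptote}) then forces $t^{-2/3}\sigma(t) \to c'$ for some central configuration $c'$. To conclude $c'=c$, I would argue by comparison: since $\sigma_n$ terminates at $\gamma_c(t_n) = \alpha c\, t_n^{2/3}$ while $\sigma_n \to \sigma$ uniformly on compacta, the ``direction at infinity'' recorded by the target points is $c$, and the minimizing property of $\sigma_n$ (together with the zero-energy constraint) should prevent the limit curve from peeling off toward a different asymptotic direction. Concretely, if $c'\neq c$ one could build a competitor to $\sigma_n$ for large $n$ by first following a Chazy-asymptotic solution toward $c$ and then shooting to $\gamma_c(t_n)$; a sharp comparison of JM lengths, using the Chazy expansions of both $\gamma_c$ and the hypothetical $c'$-asymptotic tail of $\sigma$, should produce a contradiction. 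This quantitative tail comparison is the delicate ingredient of \cite{P-SM} and the principal technical point of the proof.
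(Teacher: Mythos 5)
This proposition is not proved in the paper at all: it is quoted from Percino--S\'anchez Morgado \cite{P-SM}, so there is no in-paper argument to compare yours against. Judged on its own terms, your steps (a)--(c) are the standard Busemann/weak-KAM construction and are essentially sound: monotonicity plus the upper bound $d_{JM}(\gamma_c(0),x)$ gives existence of the limit; the chain $d_{JM}\le \ell_{JM}\le A_L$ gives domination; and the Arzel\`a--Ascoli extraction from minimizers $\sigma_n$ joining $x$ to $\gamma_c(t_n)$, with calibration obtained by passing to the limit in $A_L(\sigma_n|_{[0,b]})=d_{JM}(x,\gamma_c(t_n))-d_{JM}(\sigma_n(b),\gamma_c(t_n))$ and invoking lower semicontinuity of the action, is exactly how such weak KAM solutions are built. (Two small technical repairs: equicontinuity should come from the uniform action bound via Cauchy--Schwarz, giving a uniform H\"older-$\tfrac12$ estimate, rather than from ``speed $=\sqrt{2U}$'', which is not uniformly bounded near collisions; and you need to argue $T_n\to\infty$ so that the limit curve is defined on all of $[0,\infty)$.)

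The genuine gap is your step (d), and it is not a peripheral one. The ``Moreover'' clause --- that every curve calibrated by $B_c$ is asymptotic to the \emph{same} Lagrange configuration $c$ --- is precisely the part of the proposition that the paper actually uses: in the proof of Corollary \ref{Busemann} and Theorem \ref{converse}, the inclusion $(\a(t),\dot\a(t)^*)\in W^s_+(l)$ is deduced from ``$\a$ is asymptotic to $c$''. Da Luz--Maderna plus Chazy only give you that the calibrated curve is parabolic and hence asymptotic to \emph{some} central configuration $c'$; excluding $c'\ne c$ (including the other Lagrange orientation and the three Euler configurations) is the whole content. Your proposed remedy --- build a competitor for $\sigma_n$ and compare JM lengths of the two tails --- is a plausible strategy but is stated only as ``should produce a contradiction''; the leading-order JM length to reach radius $r$ in direction $s_\infty$ grows like $2\sqrt{2U(s_\infty)}\sqrt{r}$, and since $U(c)<U(c')$ for the non-Lagrange configurations, the naive first-order comparison actually points the wrong way for a curve forced to terminate at $\gamma_c(t_n)$; one needs a genuinely quantitative second-order comparison (or a different mechanism, as in \cite{P-SM}). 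As written, the second assertion of the proposition is asserted rather than proved.
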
 
This proposition will be  central to proving  Theorem~\ref{converse}

\section{Set-up}
\label{setup}
 
 The three-body  configuration space is three copies of the   Euclidean plane.
 We write points, or ``configurations'', as $q = (q_1, q_2, q_3),  q_i \in \R^2 \cong \C$.
 and we write velocities as $v = (v_1, v_2, v_3), v_i \in \R^2 \cong \C$.
 The masses are $m_i > 0$. 
 By a standard trick from introductory physics  we can   
 assume that the center of mass of the configuration  is zero:
 \begin{equation}
 m_1 q_1 + m_2 q_2  + m_3 q_3 =0
 \label{cOfm}
 \end{equation}
 and  that the  total linear momentum is also zero:
 \begin{equation} m_1 v_1 + m_2 v_2  + m_3 v_3 =0.
 \label{linmomentum}
 \end{equation}
 We write $\E \cong \R^4  \subset \R^2 \times \R^2 \times \R^2$
 for either  four-dimensional linear subspace.

  We introduce the mass metric: 
  \[ \langle v , w \rangle_m : = m_1 v_1 \cdot w_1 + m_2 v_2 \cdot w_2 + m_3 v_3 \cdot w_3 =v\cdot Mw\]
where $M=\diag(m_1,m_1,m_2,m_2,m_3,m_3)$ is the mass matrix,
so that the usual kinetic energy is
$$K = \frac{1}{2} \langle v , v \rangle_m ; v \in \E.$$
 We will also make use of the moment of inertia of the configuration with respect to the origin
  \begin{equation} 
  I(q) = \langle q , q \rangle_m.
 \label{momentofinertia}
 \end{equation}
 An alternative, translation invariant formula for the moment of inertia is
$$I(q) = (m_1m_2 r_{12}^2+m_1m_3 r_{13}^2+m_2m_3 r_{23}^2)/m\qquad r_{ij}=|q_i-q_j| \qquad m = m_1 + m_2 + m_3.$$

The Lagrangian  for the three-body problem  
$$L: \E \times \E \to]0,\infty]$$
is given by  
\[ L (q,v) = K(v) +   U(q)\]
  where
$$U(q) = \frac{m_1 m_2}{|q_1 - q_2 |} +  \frac{m_2 m_3}{|q_2 - q_3 |} +  \frac{m_1 m_3}{|q_1 - q_3 |}$$
is the negative of the potential energy.  Newton's equations read
\begin{equation}
 \ddot q = \nablam U (q) 
 \label{Newton}
 \end{equation}
where the gradient is with respect to the mass-metric: $dU(q) (v) = \langle \nablam U (q), h \rangle_m$.  
The distance from the origin (which is triple collision) with respect to the mass metric is denoted by $r$:
\[ r(q) = \sqrt{I(q)}= \sqrt{ \langle q , q \rangle_m }.\]
Each configuration $q\in\E$ determines a unique {\em normalized configuration} $s = q/r$ with $r(s)=1$.

A {\em central configuration or CC} is a point  $q\in\E$ such that
\begin{equation}
\label{ccc}
\nablam U (q) +  \l q = 0.
\end{equation} 
Using the homogeneity of the potential it is easy to see that
$\l = \frac{U(q)}{r(q)^2}$ which reduces to $\l = U(c)$ for a normalized central configuration.
The homothetic parabolic motion associated to  such a normalized central  configuration $c \in \E$ is 
\begin{equation}
\label{ccr}
\gamma_c (t) = \Big(\frac 92 U(c)\Big)^\frac 13c  t^{2/3}
\end{equation}
and is an exact solution to Newton's equations.  

\subsection{McGehee coordinates}
\label{subsec:McGehee}
Following McGehee, define new variables
$$r= \sqrt{\langle q,q\rangle_m}\qquad s = \fr{q}{r} \qquad z= r^{\fr12}\dot q.$$
The normalized configuration variable $s=(s_1,s_2,s_2)$ satisfies
$I(s) = r(s)^2  = 1$.

Introduce the function
 \begin{equation} \label{McGehee3}
 v = \massmetric{z}{s} 
 \end{equation}
 and define  a  new time variable $\tau$ by 
$\dfrac{d}{d \tau} =r^\frac 32  \dfrac{d}{dt}$ 
and write $f^\prime = \dfrac{df}{d\tau}.$  
Then the new, blown-up variables satisfy
\begin{align}
\label{eq:mcgeheer}r^\prime  &= vr\\
\label{eq:mcgehees}s^\prime &= z-vs\\
\label{eq:mcgeheez}z^\prime &= \nablam U(s) + \fr12 vz.
\end{align}
\noindent
In deriving these one uses the homogeneity of the potential to see that $U(q) = r^{-1}U(s)$ and $\nablam U(q) =  r^{-2}\nablam U(s)$.
The equations now make sense when $r = 0$: the  triple collision singularity has been blown-up into the  invariant manifold   $\{r=0\}$ and the differential equations for $(s,z)$ are independent of $r$.

In these coordinates, the eight-dimensional  phase space $X =\E\times \E $ is given by the system of equations 
\begin{equation}\label{eq:normalize}
m_1 s_1 + m_2 s_2  + m_3 s_3 =0, \qquad m_1 z_1+m_2 z_2+m_3 z_3=0\qquad \text{ and }  \langle s,s\rangle_m=1.
\end{equation}
The flow preserves $X$ as well as  the energy levels $H(q,p) = h$ which are now given by
$$H(s,z) = \fr12 \massmetric{z}{z} -U(s) = rh.$$
We will be especially interested in the case $h=0$.

The rate of change of $v=\massmetric{z}{s} $ satisfies
$$v^\prime = \massmetric{z}{z} -U(s) -\fr12v^2 = \fr12 \massmetric{z}{z} - \fr12v^2+rh.$$
If $r=0$ (triple collision) or $h=0$ (zero energy), this simplies to
$$v^\prime =\fr12(\massmetric{z}{z} -v^2) \ge 0$$
where the nonnegativity follows from the Cauchy-Schwarz inequality. 
Thus $v$ is a Lyapunov function on the triple collision ($r=0$) and zero energy ($h=0$) submanifolds.

When $h=0$ we  study the  motions with $r\into\infty$ by  replacing   $r$ by $u = r^{-1}$.  Then
equation \eqref{eq:mcgeheer} is replaced by: 
\begin{equation}\label{eq:mcgeheeu}
u^\prime  = -vu
\end{equation} 
while   equations \eqref{eq:mcgehees}, \eqref{eq:mcgeheez} and the energy 
equation $H(s,z)=0$ remain unchanged.  Now $\{u=0\}$ is invariant and represents the 
dynamics at infinity for the zero energy problem.

\subsection{Restpoints}\label{equilibria}
A point $(s,z)$ is an equilibrium point for the differential equations 
\eqref{eq:mcgehees}, \eqref{eq:mcgeheez} if and only if
$$v^2 = 2U(s) \qquad z= v s$$
and
\begin{equation}\label{eq:CC}
\nablam U(s) +U(s)s = 0
\end{equation} 
which is exactly the equation (\ref{ccc}) for a normalized central configuration.

Equation (\ref{eq:CC}) can also be viewed in another way. The normalization condition $\langle s,s\rangle_m=1$ defines a three-sphere
 $\mathcal{E}\subset \E$.  Then (\ref{eq:CC}) is the equation for critical points
 of the restriction of $U(s)$ to this sphere.  In fact, the equations  can be written $\tilde\nabla U(s) = 0$ where
\begin{equation}\label{eq:gradtilde}
\tilde\nabla U(s) =  \nablam U(s) +U(s)s.
\end{equation} 
is  the gradient  
of the restriction of $U$ to the three-sphere  with respect to the  metric on this sphere induced by   the mass metric.  
(The restricted gradient of a general smooth    $f: \E \to \R$  is  $\tilde \nabla f (s) = (\nabla_m  f (s)) ^{T}$,
where $v^{T}$ is the orthogonal projection of the vector $v \in \E$
to the tangent space at $s$ to the sphere.  Using Euler's identity
for homogeneous functions, we find that if   $f$ is homogeneous of degree $\alpha$ then $(\tilde \nabla  f (s))  = \nabla_m  f - \alpha f(s) s$, hence the expression for $\tilde\nabla U$.)  
 Because of the rotational symmetry of the potential, there are actually five circles of critical points, one for each of the five central configuration shapes.

Each normalized central configuration $s_0$ determines two equilibrium points in the triple collision manifold $(0,s_0, z_0)$
where
\begin{equation}\label{eq:equilibrium}
z_0 = v_0 s_0\qquad v_0 = \pm \sqrt{2U(s_0)}.
\end{equation}
For the zero energy problem we also get two equilibrium points at infinity with the same $(s_0, z_0)$ and $u=0$.  

For a given normalized  central configuration $c$, its equilibria at collision and  infinity are connected by the zero energy parabolic homothetic orbits of equation~(\ref{cc}), or, what is the same, equation ~(\ref{ccr}).  These are precisely the zero energy  solutions such that $r$ changes while the
$(s,z)$ remains at their equilibrium values (\ref{eq:equilibrium}). The size $r$ is given by
$$r(\tau) = \exp( v_0 \tau) \qquad\text{or}\qquad u(\tau) = \exp( -v_0 \tau).$$
Here $\tau$ denotes the normalized time variable and $v_0 = \langle s_0, z_0 \rangle$. 

\subsection{Stable and unstable manifolds}\label{sec_stableunstable}
As just discussed the parabolic solutions are precisely the solutions in the stable manifold of 
one of the rest points at infinity.    For the three-body problem, these rest points are all hyperbolic (after allowing for rotational symmetry) and their Lyapunov exponents will play an important role in what follows.  These exponents  have been calculated before \cite{Siegel,SM}  but we will present the results here (with some details relagated to an Appendix) for the sake of completeness and to correct some unfortunate typos which appeared in \cite{Moeckel4}.

Consider the variational equation of the blown-up differential equations \eqref{eq:mcgeheer}, \eqref{eq:mcgehees}, \eqref{eq:mcgeheez}  at one of the equilibrium points
$p = (0,s,z)$.  Differentiation and evaluation at $r=0$ gives the $13\times 13$ matrix:
$$
A =\m{v&0&0\\&&\\
0&-vI -s z^tM &I - s s^t M \\&&\\
0&D\nablam U(s)+\fr12 z z^t M &\fr12 v I +  \fr12 z s^t M}.
$$
For a restpoint at infinity, that is to say $u=0$,  the only difference in $A$ is that the upper left $v$ of the matrix becomes $-v$ so the two cases can be considered together.  
 
Some words may be helpful   regarding the  terms 
  $sz^t M$, $s s^t M$, $z z^t M$ and $z s^t M$. 
The term $s z^t M$ in the 2-2 block of $A$, for example, 
 describes  the linear operator taking $\delta s$ to $s \langle z , \delta s \rangle_m$. 
All  these terms arise from  linearizing the quadratic function 
 $v = \langle s, z \rangle_m = s^t M z$ which   occurs as a factor in equations \eqref{eq:mcgehees}, \eqref{eq:mcgeheez} , and \eqref{eq:mcgeheer}.
 
Let $(\d r, \d s, \d z)\in T_pX \subset \R^{13}$ denote a tangent vector to  $X$ at $p$, where $X$ is our eight-dimensional  phase space
defined by the  normalization equations (\ref{eq:normalize}).  Linearizing the first and last of the normalization equations  we find
\begin{equation}\label{eq:dnormalize}
m_1 \d s_1 + m_2 \d s_2  + m_3 \d s_3 = 0 \text{ and  }  s^t M\d s = 0
\end{equation}
which    defines the three-dimensional tangent space to the sphere  $\mathcal{E} \subset \E$ at $s$.
Since $z = v s$ we also have 
$$z^t M \d s = 0$$
so we can ignore the terms involving $z^tM $ in the second column of $A$.  In general, it is not true that
$s^tM \d z=0$, however,   this equality does hold  for vectors  lying in $T_p X$ and also  tangent to the energy manifold $H(s,z)=0$, since  for such vectors 
$$\d H = z^t M \d z - \nabla U(s)\cdot \d s = v s^t M \d z +U(s)\, s^tM \d s =  v s^tM \d z .$$
Thus vectors with $\d H=0$ also have $s^tM \d z = 0$ and for these the third column of $A$ also simplifies.

One easily checks that the  vectors $(\d r, \d s, \d z) = (1,0, 0)$ and $(\d r, \d s, \d z) = (0,0,s)$ are eigenvectors in $T_pX$ with
eigenvalues $\l_1= v$ and $\l_2 = v$.  The first vector satisfies $\delta H = 0$ while the second vector satisfies $\delta H = v \ne 0$.   
The subspace $\d r = \d H =0$ is a 6-dimensional subspace of $T_p X$
 invariant under $A$. It follows that the other 6 eigenvectors of $A$ restricted to $T_pX$  must lie in this subspace.  Dropping the $z^tM $ and $s^tM$ terms from $A$ we find that the other eigenvectors are of the form 
$(0, \d s, \d z)$ where $(\d s, \d z)$ is an eigenvector of the $12\times 12$ matrix
$$
B= 
\left[\begin{array}{cc}
-vI&I\\
D\nablam U(s)&\fr12 v I
\end{array}\right]
$$

The following lemma  (see \cite{Devaney_structural}  for the lemma's origin) gives the eigenvectors and eigenvalues of $B$ in terms of those of $D\nablam U(s)$ or equivalently those of \footnote{From equation ~\ref{eq:gradtilde}
we get that   $D\tilde\nabla U$ equals the expression of  equation~\ref{eq:dgradtilde} plus  the term $\nabla U (s) \otimes s^t M$ which we  ignore since $\langle s, \delta s \rangle_m =0$.} 
\begin{equation}\label{eq:dgradtilde}
D\tilde\nabla U(s) = D\nablam U(s) + U(s)I.
\end{equation}
\begin{lemma}\label{lemma_lambda}
Let $s$ be a normalized central configuration and $z = vs$ where $v^2
= 2U(s)$.  If a vector $\d s$ satisfying (\ref{eq:dnormalize}) is an
eigenvector of $D\tilde\nabla U(s)$ with eigenvalue $\a$  then  the
vectors $(\d s, k_\pm \d s)$ are eigenvectors of $B$ with eigenvalues
$$\l_\pm = \fr{ -v \pm \sqrt{v^2+16 \a}}{4} \qquad k_\pm = v + \l_\pm$$
\end{lemma}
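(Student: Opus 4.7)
\medskip
\noindent\textbf{Proof plan for Lemma \ref{lemma_lambda}.}
The statement is a direct linear-algebra computation, so the plan is to verify it by plugging the proposed ansatz into $B$ and matching block components. Set $\delta z = k \,\delta s$ for an undetermined scalar $k$ and compute
\[
B\begin{bmatrix}\delta s\\ k\,\delta s\end{bmatrix}
=\begin{bmatrix}(-v + k)\,\delta s\\ D\nablam U(s)\,\delta s+\tfrac12 v\, k\,\delta s\end{bmatrix}.
\]
Demanding that this equal $\lambda\bigl(\delta s,\ k\,\delta s\bigr)^t$ splits into two scalar conditions on the line spanned by $\delta s$: the top block yields the relation $k=v+\lambda$, and the bottom block yields
\[
D\nablam U(s)\,\delta s=\bigl(\lambda-\tfrac12 v\bigr)k\,\delta s.
\]

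Next I would rewrite $D\nablam U(s)$ in terms of the restricted Hessian using \eqref{eq:dgradtilde}, namely $D\nablam U(s)=D\tilde\nabla U(s)-U(s)I$. Since $\delta s$ is assumed to be an eigenvector of $D\tilde\nabla U(s)$ with eigenvalue $\alpha$, the bottom equation becomes the scalar identity
\[
\alpha - U(s) = \bigl(\lambda-\tfrac12 v\bigr)(v+\lambda)
           = \lambda^{2}+\tfrac12 v\lambda - \tfrac12 v^{2}.
\]
Using the equilibrium identity $v^{2}=2U(s)$ the constant terms cancel, leaving the quadratic
\[
\lambda^{2}+\tfrac12 v\lambda - \alpha = 0,
\]
whose roots are exactly $\lambda_\pm=\tfrac14\bigl(-v\pm\sqrt{v^{2}+16\alpha}\bigr)$, and then $k_\pm=v+\lambda_\pm$ by the top block.

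Finally I would check that the constructed eigenvectors actually lie in the appropriate invariant subspace of $T_pX$ and not just in $\R^{12}$. The vector $(\delta s,k_\pm\delta s)$ needs to obey the two conditions used above when the $z^tM$ and $s^tM$ columns of $A$ were dropped: the zero-center-of-mass constraint on both blocks (immediate, as both blocks are proportional to $\delta s$, which is already assumed to satisfy \eqref{eq:dnormalize}) and $s^tM\,\delta z=0$ (which follows from $\delta z=k_\pm\delta s$ together with $s^tM\delta s=0$). The computation in the text before the lemma then also shows $\delta H=v s^tM\delta z=0$, confirming consistency with the energy constraint. The computation involves no real obstacle; the only place one must be careful is keeping track of which columns of $A$ simplify on $T_pX\cap\{\delta H=0\}$, since that is exactly what reduces $A$ to the $12\times12$ matrix $B$ in the first place.
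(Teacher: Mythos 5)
Your computation is correct and is essentially identical to the paper's proof: plug the ansatz $(\delta s, k\,\delta s)$ into $B$, use $D\nablam U(s)\,\delta s=(\a-\tfrac12 v^2)\delta s$, and solve the resulting quadratic $\l^2+\tfrac12 v\l-\a=0$. The extra consistency check about the invariant subspace is fine but not part of the paper's (very short) proof.
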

\begin{proof}
The assumptions imply that 
$$D\nabla_m U(s) \d s = (\a -U(s))\d s = (\a-\fr12 v^2)\d s.$$
 Then the product of $B$ and $(\d s, k \d s)$ is $( (-v+k)\d s, (\a-\fr12 v^2+\fr12 vk)\d s)$.   Setting this equal to $\l( \d s, k \d s)$ leads to the equations
 $$k = v + \l\qquad \l^2+ \fr12 v \l -\a = 0$$
 and solving for $\l , k$ completes the proof.
\end{proof}

Using the rotational symmetry, it is easy to 
guess one eigenvector of $D\tilde\nabla U(s)$ satisfying (\ref{eq:dnormalize}).  Let $s^\perp = (s_1^\perp, s_2^\perp, s_3^\perp)$ denote the vector with each $s_i$ rotated by $90^\circ$ in the plane.  Since $U$ is rotationally invariant we have $D\tilde\nabla U(s) s^\perp = 0$
so $\d s= s^\perp$ is an eigenvector with $\a = 0$.  There are two more eigenvectors satisfying (\ref{eq:dnormalize}) and they will determine what we will call the nontrivial eigenvalues of $D\tilde\nabla U(s)$.    These will be calculated in the appendix.  For now, we just record the results.

\begin{prop}\label{prop_evals}
Let $s$ be a normalized central configuration and $p = (r,s,z) = (0, s, v s)$ one of the triple collision restpoints where either $v = \sqrt{2U(s)}$ or $v=-\sqrt{2U(s)}$.  
Let $\a_1, \a_2$ be the two nontrivial eigenvalues of $D\tilde\nabla U(s)$.  Then the  eight Lyapunov exponents of the variational equations on $T_pX$ are
$$\l = v, v, -v,0, \fr{ -v \pm \sqrt{v^2+16 \a_1}}{4} ,\fr{ -v \pm \sqrt{v^2+16 \a_2}}{4}.$$
The eigenvalues for an equilibrium at infinity $p = (u,s,z) = (0, s, v Ms)$ are the same except the first one becomes $-v$.
\end{prop}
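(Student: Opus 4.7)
The plan is to decompose the $8$-dimensional tangent space $T_pX$ at the restpoint $p = (0, s, vs)$ into invariant pieces on which the linearization $A$ can be analyzed cleanly, and to assemble the eight exponents from these pieces.

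First I would verify the two ``easy'' eigenvectors in $T_pX$ directly. The vector $(\d r, \d s, \d z) = (1, 0, 0)$ is immediately an eigenvector with eigenvalue $v$ at a collision restpoint (and $-v$ at an infinity restpoint, since there the top-left entry of $A$ is $-v$), as the rest of the first column of $A$ vanishes. For $(0,0,s)$, using $\massmetric{s}{s} = 1$ and $z = vs$ one checks that $(I - ss^tM)s = 0$ and $(\fr{v}{2} I + \fr12 z s^tM)s = vs$, so $A(0,0,s) = v(0,0,s)$. These two vectors span a $2$-dimensional complement of the invariant $6$-dimensional subspace $\{\d r = \d H = 0\}$, accounting for the first two of the listed exponents.

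Next I would reduce the action of $A$ on this $6$-dimensional subspace to the matrix $B$. On vectors with $\d r = \d H = 0$ the relations $s^tM\d s = 0$ and $s^tM\d z = 0$ (combined with $z = vs$) annihilate every $s^tM$- and $z^tM$-term in $A$, so the effective action is $B$ applied to pairs $(\d s, \d z)$ with $\d s$ tangent to the mass-metric unit sphere. Applying Lemma~\ref{lemma_lambda}, each eigenvalue $\a$ of $D\tilde\nabla U(s)$ restricted to this $3$-dimensional tangent space yields the pair $\l_\pm = (-v \pm \sqrt{v^2 + 16\a})/4$ of eigenvalues of $B$, via the eigenvectors $(\d s,\, (v + \l_\pm)\d s)$.

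I would then identify the three eigenvalues of $D\tilde\nabla U(s)$ on the tangent space to the sphere. One of them is $\a = 0$, coming from the rotation direction $s^\perp$: differentiating the rotational equivariance $\nablam U(Rs) = R\, \nablam U(s)$ along $Js = s^\perp$ gives $D\nablam U(s)\cdot s^\perp = (\nablam U(s))^\perp$, and combining with the central-configuration identity $\nablam U(s) = -U(s)\, s$ yields $D\tilde\nabla U(s)\cdot s^\perp = 0$. The remaining two eigenvalues are by definition the nontrivial $\a_1, \a_2$. Feeding all three values of $\a$ through Lemma~\ref{lemma_lambda} produces six eigenvalues of $B$ which, combined with the two from the complement, give the eight exponents stated in the proposition. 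Finally, the infinity case differs from collision only in the sign of the top-left entry of $A$ (since $r^\prime = vr$ becomes $u^\prime = -vu$), flipping the eigenvalue of $(1,0,0)$ from $v$ to $-v$ and leaving every other exponent unchanged.

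The main obstacle in this plan is the explicit determination of the two nontrivial eigenvalues $\a_1, \a_2$ of $D\tilde\nabla U(s)$ on the $2$-dimensional complement of $s^\perp$ inside the tangent space to the sphere. For the Euler and Lagrange central configurations this requires choosing an adapted basis and performing a direct, mass-dependent calculation using the specific geometric form of each configuration, which is precisely what the paper defers to its appendix.
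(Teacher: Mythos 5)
Your proposal follows essentially the same route as the paper's proof: the two trivial eigenvectors $(1,0,0)$ and $(0,0,s)$ with eigenvalue $v$ (the first flipping to $-v$ at infinity), reduction of $A$ on the invariant subspace $\{\d r=\d H=0\}$ to the matrix $B$, the rotational eigenvector $s^\perp$ with $\a=0$, and Lemma~\ref{lemma_lambda} applied to $0,\a_1,\a_2$ to produce the remaining six exponents. One caveat, which your write-up shares with the paper's own proof: substituting $\a=0$ into Lemma~\ref{lemma_lambda} yields the pair $\{0,-v/2\}$, not $\{0,-v\}$ (the second root of $\l^2+\tfrac12 v\l=0$ is $-v/2$, matching the decay rate $r^{-1/2}J$ of the rescaled angular momentum), so the exponent listed as $-v$ in the proposition should read $-v/2$; this does not change any of the sign counts used later, but your claim that the computation reproduces the eight exponents exactly as stated is not literally accurate on this one entry.
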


The Lagrangian (equilateral) critical points form circles of local minima in $\mathcal E$.  The corresponding nontrivial eigenvalues are both positive.
\begin{prop}\label{prop_Lagrangeevals}
The nontrivial eigenvalues of $D\tilde\nabla U(s)$ at an equilateral central configuration are
$$\a_1,\a_2 = \fr{3U(s)}{2}\left(1\pm\sqrt{k}\right)$$
where
$$k=\fr{(m_1-m_2)^2+(m_1-m_3)^2+(m_2-m_3)^2}{2(m_1+m_2+m_2)^2}.$$
The four corresponding nontrivial eigenvalues at one of the Lagrangian equilibrium points at triple collision or at infinity are
$$\l = \fr{-v}{4}\left( 1\pm \sqrt{13\pm 12\sqrt{k}}\right)$$
\end{prop}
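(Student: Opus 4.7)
The plan is to compute the nontrivial eigenvalues $\alpha_1,\alpha_2$ of $D\tilde\nabla U(s)$ at the equilateral configuration, then apply Lemma~\ref{lemma_lambda} for the Lyapunov exponents. The second step is pure substitution: with $v^2=2U(s)$ and $\alpha=\frac{3U(s)}{2}(1\pm\sqrt k)$ one has $v^2+16\alpha=2U(s)(13\pm 12\sqrt k)$, so $\sqrt{v^2+16\alpha}=|v|\sqrt{13\pm 12\sqrt k}$ and Lemma~\ref{lemma_lambda} yields
\[
\lambda=\frac{-v}{4}\bigl(1\pm\sqrt{13\pm 12\sqrt k}\bigr).
\]
The real work is the eigenvalue calculation.

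Place the three bodies at the vertices of an equilateral triangle of common side $\ell$ with center of mass at the origin. Using $I(q)=\sigma\ell^2/m$ with $\sigma=m_1m_2+m_2m_3+m_3m_1$ and $m=m_1+m_2+m_3$, the normalization $\langle s,s\rangle_m=1$ gives $\ell=\sqrt{m/\sigma}$, $U(s)=\sigma^{3/2}/\sqrt m$, and $m/\ell^3=U(s)$. From $D^2(1/|x|)=(3\hat x\hat x^T-I)/|x|^3$ and the mass-weighted identity $\sum_{i<j}m_im_j|w_i-w_j|^2=m\langle w,w\rangle_m$ (valid under $\sum m_jw_j=0$), the Hessian $\delta^2 U$ at $s$, in complex coordinates $w_j\in\mathbb C$, becomes
\[
\delta^2 U=\frac{U(s)}{2}\langle w,w\rangle_m+\frac{3}{2\ell^3}\mathrm{Re}\,T,\qquad T:=\sum_{i<j}m_im_j(w_i-w_j)^2\bar e_{ij}^2.
\]
Restrict to $T_s\mathcal E$, defined by $\sum m_jw_j=0$ and $\langle s,w\rangle_m=0$; the rotation $w=is$ is the trivial eigenvector with $\alpha=0$, and the remaining two-dimensional transverse subspace carries $\alpha_1,\alpha_2$.

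Parametrize the transverse subspace by $\zeta=w_1+\omega w_2+\omega^2 w_3$, $\omega=e^{2\pi i/3}$, exploiting the characterization $s_1+\omega s_2+\omega^2 s_3=0$ of Lagrange configurations of one orientation (under which Lagrange-preserving perturbations are exactly those with $\zeta=0$); the map $w\mapsto\zeta$ is a $\mathbb C$-linear isomorphism onto $\mathbb C$. Rotational invariance of the mass metric forces $\langle w,w\rangle_m=C|\zeta|^2$ for some $C>0$, and since $T$ is $\mathbb C$-quadratic in the $\mathbb C$-linear variable $w(\zeta)$ one has $T=B\zeta^2$ for some $B\in\mathbb C$. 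Thus
\[
\delta^2 U=\frac{U(s)C}{2}|\zeta|^2+\frac{3}{2\ell^3}\mathrm{Re}(B\zeta^2),
\]
whose Rayleigh-quotient eigenvalues against $C|\zeta|^2$ are $\beta_\pm=\frac{U(s)}{2}\pm\frac{3|B|}{2\ell^3 C}$. The sum $\beta_++\beta_-=U(s)$ agrees with an independent trace computation: $\mathrm{tr}\,D\nablam U$ on $\E$ equals the mass-metric Laplacian $\sum_{i<j}(m_i+m_j)/r_{ij}^3=2U(s)$ at Lagrange, and subtracting the scaling eigenvalue $2U(s)$ along $s$ (Euler's identity for the degree $-1$ vector field $\nablam U$) and the rotation eigenvalue $-U(s)$ along $is$ leaves $U(s)$. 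Evaluating $T$ on the transverse subspace using the equilateral identity $(\bar e_{12}^2,\bar e_{23}^2,\bar e_{31}^2)=\zeta_0(1,\omega,\omega^2)$ for a phase $\zeta_0$, together with the complex-square identity $\sum_{i<j}m_im_j(w_i-w_j)^2=m\sum_jm_jw_j^2$, the algebra yields $|B|/C=m\sqrt k$, so the spread is $3U(s)\sqrt k$ and $\alpha_{1,2}=U(s)+\beta_\pm=\frac{3U(s)}{2}(1\pm\sqrt k)$.

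The main obstacle is this explicit evaluation of $|B|/C$: the factors $\bar e_{ij}^2$ depend on the pair while the mass weighting breaks the $\mathbb Z/3$-symmetry of the equilateral triangle, so no single symmetry argument short-circuits the computation. The Routh-like combination $k=\frac{1}{2m^2}\sum_{i<j}(m_i-m_j)^2=1-3\sigma/m^2$ emerges as the precise measure of the mass-asymmetry remaining after the trace has been fixed.
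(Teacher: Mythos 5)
Your proposal is correct, and it reaches Proposition~\ref{prop_Lagrangeevals} by a genuinely different route than the paper. The paper writes out the explicit $6\times 6$ matrix $P=\fr{I(s)}{U(s)}M^{-1}D\nabla U(s)$ in a fixed equilateral configuration, reads off four trivial eigenvectors (two translations, the scaling direction $s$ with $\b=2$, the rotation $s^\perp$ with $\b=-1$), and then pins down the remaining pair via $\trace P$ and $\trace P^2$, leaving the product $\g_1\g_2=27\sigma/(4m^2)$ to ``some computer assistance.'' You instead exploit the $\Z/3$ and rotational structure: complex coordinates, the splitting of the Hessian of $1/|x|$ into an isotropic part plus $\mathrm{Re}$ of a holomorphic quadratic form, and the Lagrange coordinate $\zeta=w_1+\om w_2+\om^2w_3$, which reduces everything to the single modulus $|B|/C$. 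This is more conceptual — the isotropic part accounts for the trace, the holomorphic part $\mathrm{Re}(B\zeta^2)$ accounts for the symmetric splitting $\pm$, and $k$ emerges as $|m_1+\om m_2+\om^2 m_3|^2/m^2$ — but, like the paper, it defers the decisive algebra (here $|B|/C=m\sqrt k$) to an asserted computation. I checked that computation: writing $p=m_1+\om m_2+\om^2m_3$ and $q=m_1m_2+\om m_2m_3+\om^2 m_1m_3$, one finds $\xi=\mu\zeta$ with $\mu=\om^2 q/\sigma$ on the transverse line, $C=(\sigma^2-|q|^2)/(3m\sigma)$ with $\sigma^2-|q|^2=3m_1m_2m_3m$, and $|B|=|\sigma\bar q-q^2|/(3\sigma)=m_1m_2m_3|p|/\sigma$, giving $|B|/C=|p|=m\sqrt k$ as you claim; this is equivalent to the paper's $\g_1\g_2=27\sigma/(4m^2)$. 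The substitution $v^2+16\a=2U(s)(13\pm12\sqrt k)$ in Lemma~\ref{lemma_lambda} is verified. One small slip: $\nablam U$ is homogeneous of degree $-2$ (it is $U$ that has degree $-1$), though your stated scaling eigenvalue $2U(s)$ along $s$ is nevertheless correct.
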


After allowing for the rotation, the  Eulerian, collinear critical points are saddles with one positive and one negative nontrivial eigenvalue.  Their values depend on the shape of the configuration.  Consider the collinear central configuration with $m_2$ between $m_1$ and $m_3$.  Instead of normalizing the configuration we can look for critical points of the translation and scale invariant function
$$F(s) = I(s)U(s)^2$$
with no constraints.  It is easy to see that if $s$ is a critical point of $F$ then the corresponding normalized configuration satisfies (\ref{eq:CC}).
Using $F$, we may assume without loss of generality that
$$s_1=(0,0)\qquad s_2= (r,0)\qquad s_3=(1+r,0)$$
where $0<r<1$.  This gives a function of one variable $F(r)$ and setting $F'(r)=0$ leads to the fifth degree equation
\begin{equation}
\label{eq:g}
\begin{aligned}
g = (m_2+m_3&)r^5+(2m_2+3m_3)r^4+(m_2+3m_3)r^3\\
&-(3m_1+m_2)r^2-(3m_1+2m_2)r-(m_1+m_2)=0.
\end{aligned}
\end{equation}
There is a unique root with $0<r$ by Descartes' rule of signs which will determine the shape of the collinear CC.

\begin{prop}\label{prop_Eulereevals}
The nontrivial eigenvalues of $D\tilde\nabla U(s)$ at the collinear central configuration with $m_2$ between $m_1,m_3$ are
$$\a_1,\a_2 = -U(s)\nu,\, U(s)(3+2\nu)$$
where
\begin{equation}\label{eq_nu}
\nu = \fr{m_1(1+3r+3r^2)+m_3(3r^3+3r^4+r^5) }{ (m_1+m_3)r^2  + m_2(1+r)^2(1+r^2)  }
\end{equation}
and $r$ is the positive root of (\ref{eq:g}).
The four corresponding nontrivial eigenvalues at the Eulerian equilibrium points at triple collision or at infinity are
\begin{equation}
  \label{eq:eigen-euler}
\l = \fr{-v}{4}\left( 1\pm \sqrt{1-8\nu}\right), \fr{-v}{4}\left( 1\pm \sqrt{25+16\nu}\right).  
\end{equation}
 The values at the other Eulerian restpoints are found by permuting the subscripts 
on the masses.
\end{prop}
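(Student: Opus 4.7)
The plan is to reduce the proposition to computing the two nontrivial eigenvalues $\alpha_1,\alpha_2$ of $D\tilde\nabla U(s)$ at the Euler collinear CC with $m_2$ between $m_1,m_3$, then to read off the $\lambda$ values mechanically via Lemma~\ref{lemma_lambda}. The reflection symmetry across the line of masses, together with rotational symmetry, decomposes $T_s\mathcal{E}$ (three dimensions) orthogonally in the mass metric into three $D\tilde\nabla U$-invariant lines: the rotational direction $\delta s = s^\perp$ (trivial eigenvalue $\alpha=0$), the ``along-line'' direction tangent to the one-parameter family of collinear CCs through $s$, and the ``perpendicular-to-line'' direction. The two nontrivial eigenvalues are the eigenvalues on the latter two lines.

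For the perpendicular eigenvalue $\alpha_1$, a perturbation $w_i = (0,y_i)$ with $m_1y_1+m_2y_2+m_3y_3=0$ gives
\[(D\nabla_m U(s)\,w)_i = \sum_{j\ne i} m_j r_{ij}^{-3}(y_j-y_i),\]
which defines a symmetric operator $L$ on the two-dimensional subspace $\{\sum m_i y_i =0\}$. One eigenvalue of $L$ is $-U(s)$, realized by the centered $x$-coordinates $y_i = x_i-\bar x$ (this is the infinitesimal rotation $s^\perp$, for which $\alpha=0$, i.e., the eigenvalue of $D\nabla_mU$ is $\alpha-U(s)=-U(s)$). The other eigenvalue is extracted by diagonalizing the residual $1{\times}1$ block in this two-dimensional space, giving a rational function of $m_1,m_2,m_3,r$; a direct computation using the spacings $r_{12}=r$, $r_{23}=1$, $r_{13}=1+r$ together with $g(r)=0$ to simplify yields $\alpha_1 = -U(s)\nu$ with $\nu$ as in \eqref{eq_nu}.

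For $\alpha_2$ I use the trace identity $\alpha_1+\alpha_2 = \mathrm{tr}_{T_s\mathcal{E}}(D\tilde\nabla U(s))$. Euler's identity applied to $\nabla_m U$ (which is homogeneous of degree $-2$) gives $D\nabla_m U(s)\cdot s = 2U(s)\,s$, so
\[\mathrm{tr}_{T_s\mathcal{E}}(D\nabla_m U(s)) = \Delta_m U(s) - 2U(s) = \sum_{i<j}(m_i+m_j)r_{ij}^{-3} - 2U(s),\]
computing the mass-metric Laplacian term by term (each $1/|q_i-q_j|$ contributes $m_im_j(m_i^{-1}+m_j^{-1})/r_{ij}^3 = (m_i+m_j)/r_{ij}^3$ to $\Delta_m U$). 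Adding $3U(s)$ to pass from $\mathrm{tr}(D\nabla_m U)$ to $\mathrm{tr}(D\tilde\nabla U)$, then subtracting $\alpha_1=-U(s)\nu$ and invoking $g(r)=0$ once more to express the sum $\sum(m_i+m_j)r_{ij}^{-3}$ in terms of $U(s)$ and $\nu$, collapses the right-hand side to $U(s)(3+2\nu)$.

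Having the $\alpha_i$ in hand, Lemma~\ref{lemma_lambda} finishes the proof: since $v^2=2U(s)$,
\[v^2+16\alpha_1 = v^2(1-8\nu), \qquad v^2+16\alpha_2 = v^2\bigl(1+8(3+2\nu)\bigr) = v^2(25+16\nu),\]
giving the four nontrivial Lyapunov exponents in \eqref{eq:eigen-euler}; the values at the other two Euler restpoints follow by permuting the mass labels. The principal obstacle is the bookkeeping in the two places where $g(r)=0$ is used: first to collapse the residual $1{\times}1$ perpendicular block to the compact form $-U(s)\nu$, and again to recognize $\sum(m_i+m_j)r_{ij}^{-3}$ as $U(s)(2+\nu)$ in the trace computation. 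Neither step is conceptually hard, but both require inserting the quintic relation at exactly the right moment, and this is what the appendix calculation must carefully verify.
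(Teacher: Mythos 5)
Your proposal is correct and follows essentially the same route as the paper's appendix proof: decompose by the reflection symmetry of the line (the paper's block structure $M^{-1}D\nabla U(s)=\m{2C&0\\0&-C}$), identify the translation and rotation ($s^\perp$) eigenvectors, recover the remaining nontrivial eigenvalue from a trace identity (your $\Delta_m U(s)=\sum_{i<j}(m_i+m_j)r_{ij}^{-3}$ is exactly the paper's $\tau=\trace C$), and verify the rational form of $\nu$ by inserting the quintic $g(r)=0$, before applying Lemma~\ref{lemma_lambda}. The only cosmetic difference is that the paper reads both nontrivial $\a$'s off the single $3\times 3$ matrix $C$, which makes the relation $2\a_1+\a_2=3U(s)$ immediate, whereas you compute the perpendicular ($y$-)block directly and obtain the along-line eigenvalue from the global trace.
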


Note that  the Eulerian restpoints have a pair of nonreal eigenvalues if and only if $\nu > \fr18$.  

\begin{definition} \label{spiraling} (i) We say that the Euler configuration with mass 2 in the middle is {\it spiraling} 
if  $\nu > \fr18$  with $\nu$  as in proposition \ref{prop_Eulereevals}.

(ii) If each of the three  Euler configurations is spiraling then we say that the mass ratios $[m_1:m_2:m_3]$ are in the ``spiraling range''.
\end{definition}

 Figure~\ref{fig_masses} shows the masses for which $\nu>\fr18$ for each of the three Eulerian restpoints.

\subsection{Parabolic motions tend to rest points at infinity} \label{parabolics}
The  qualitative study of parabolic  solutions
goes   at least back to Chazy \cite{Chazy} (particularly  chapter 3).
We now show that  the stable manifolds of the rest points at infinity are precisely the unions 
classical parabolic solutions, a fact well known to experts.

We will use the following weak form of the definition of ``parabolic''.

\begin{definition} A solution to the three-body problem is (future) parabolic if  the solution's domain contains a positive  half line $[t_0, \infty)$ and if the Newtonian velocities
of all three  bodies  tend to zero
as  (Newtonian) time tends to infinity. 
\end{definition}

{\bf Remark.}  Define ``past parabolic'' by letting time tend to negative infinity.  
We stick with future parabolic for simplicity.  


\begin{prop}
\label{prop:parabolic}
Any parabolic solution has energy $0$ and lies in the stable manifold
of one of the rest points at infinity.    Conversely every solution in the stable
manifold of a rest point at infinity and such that $u>0$ is a parabolic solution. 

Alternatively: Let $R_+$ denote the collection of rest points at infinity for which $v>0$.
  Then $R_+$ is normally hyperbolic
and its stable manifold $W^s (R_+)$ is foliated by the stable manifolds $W^s (c)$ tending to the
rest point associated to the   central configuration $c$.  
Each parabolic solution  lies in some $W^s (c)$. 
\end{prop}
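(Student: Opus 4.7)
From $\dot q_i(t)\to 0$ we get $K\to 0$; conservation of energy and $U>0$ give $U\to -H$ and $H\le 0$. If $H<0$, the Lagrange--Jacobi identity $\ddot I = 4H+2U$ yields $\ddot I \to 2H<0$, forcing $I\to -\infty$, contradicting $I>0$. Hence $H=0$, $U\to 0$, every mutual distance $r_{ij}\to\infty$, and in particular $r\to\infty$, i.e.\ $u = 1/r \to 0$.

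\textbf{Step 2: the $\omega$-limit is a restpoint.} In the blown-up variables of subsection~\ref{subsec:McGehee}, the zero-energy relation $\tfrac12|z|_m^2 = U(s)$ gives
\[ v' = U(s) - \tfrac12 v^2 \ge 0, \]
with equality iff $z = vs$. Since $r\to\infty$ and $v = r^{1/2}\dot r$, $v$ is positive and monotone for large $\tau$. The Cauchy--Schwarz bound $v^2 \le |z|_m^2 = 2U(s)$ combined with the integral identity $v_\infty - v_0 = \int (U(s)-\tfrac12 v^2)\,d\tau$ will be used to force both $v_\infty<\infty$ and $U(s)$ bounded along the orbit, so $s$ avoids the binary-collision set and $(u,s,z)$ is precompact. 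LaSalle's invariance principle then places the $\omega$-limit in the flow-invariant set $\{u=0,\ v'=0,\ h=0\}$, which is precisely the equilibrium condition $\tilde\nabla U(s)=0$, $z = vs$, $v = \sqrt{2U(s)}$ of subsection~\ref{equilibria}, i.e.\ a point of $R_+$.

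\textbf{Step 3: normal hyperbolicity and the stable-manifold foliation.} Proposition~\ref{prop_evals} identifies the eigenvalues of the linearization on $T_pX$ at any $p\in R_+$ as $\{-v,v,-v,0,\l_\pm^1,\l_\pm^2\}$; the only eigenvalue with zero real part is the single $0$, whose eigenvector is tangent to the $S^1$-orbit of restpoints through $p$ (the rotational Killing direction). Hence each circle in $R_+$ is normally hyperbolic, and the invariant-manifold theorem produces an immersed smooth stable manifold $W^s(R_+)$, $S^1$-equivariantly foliated by the individual $W^s(c)$. By Step 2 any parabolic orbit enters a tubular neighborhood of such a circle, where normal hyperbolicity gives exponential decay of the transverse coordinates; the orbit therefore converges to a single restpoint and lies in $W^s(c)$. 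Conversely, if $(u(\tau),s(\tau),z(\tau))\in W^s(c)$ with $u>0$, then $u\to 0$, $s\to c$ and $z\to v_\infty c$ as $\tau\to\infty$, so $\dot q = r^{-1/2}z\to 0$; the time change $dt = r^{3/2}d\tau$ together with $r\to\infty$ gives $t\to\infty$, and the orbit is parabolic in Newtonian time.

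\textbf{Main obstacle.} The technical core is in Step 2: showing $v_\infty<\infty$ and that $s$ stays bounded away from the binary-collision set, so that compactness and LaSalle genuinely apply. I would handle this by a bootstrap on the inequality $v^2\le |z|_m^2 = 2U(s)$ together with the monotonicity of $v$ on the zero-energy level, which controls $U(s)$ from above once $v$ is known bounded. Once this is done, convergence to a single restpoint rather than to an arc of the $S^1$-orbit of restpoints is an automatic consequence of the normal-hyperbolicity picture of Step 3.
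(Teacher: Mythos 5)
Your overall architecture matches the paper's: zero energy via the Lagrange--Jacobi identity, passage to McGehee variables, $v$ as a Lyapunov function on the zero-energy level, an invariance-principle argument placing the $\omega$-limit set among the restpoints, and normal hyperbolicity of the restpoint circles to single out one restpoint and to handle the converse. Steps 1 and 3 and your converse are essentially the paper's argument (your derivation of $r\into\infty$ from $U\into 0$ is a slightly cleaner variant of the paper's use of the lower bound on $U(s)$).

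The gap is exactly where you flagged it, in Step 2, and the repair you sketch does not work. First, boundedness of $v$: your inequalities point the wrong way. Cauchy--Schwarz gives $v^2\le 2U(s)$, which bounds $v$ \emph{by} $U(s)$, while the zero-energy relation $\fr12 v^2+\fr12|w|^2=U(s)$ (with $w=z-vs$) gives only the \emph{lower} bound $U(s)\ge\fr12 v^2$; so knowing $v$ bounded does not ``control $U(s)$ from above,'' and $U(s)$ is genuinely unbounded near the binary-collision set on the sphere. The bootstrap is circular. The paper instead works in Newtonian time: from $\ddot I=2K+2h<\e$ one gets $I(t)=o(t^2)$, and combining this with the Marchal--Saari asymptotic dichotomy yields $r(t)=O(t^{2/3})$; the identity $\fr23 r(t)^{3/2}-\fr23 r(0)^{3/2}=\int_0^t v\,ds$ then forces $v$ to stay bounded. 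The same estimate is also needed to show $\tau(t)\into\infty$, without which the $\omega$-limit set in rescaled time is not even defined: if $r$ grew linearly, $\int r^{-3/2}\,dt$ would converge and the blown-up orbit would live on a finite $\tau$-interval. Second, precompactness: avoiding a neighborhood of the binary collisions is not a consequence of $v$ being bounded. The paper's argument uses that near collision $|w|$ is large (from the energy relation), that the rate of change of $v$ with respect to arclength on $\mathcal{E}$ is $\fr12|w|$, and hence that any orbit entering a small neighborhood of the collision set is pushed into $\{v>\bar v\}$, which is impossible since $v\uparrow\bar v$ monotonically. You would need to supply both of these ingredients (or substitutes for them) before LaSalle's principle applies.
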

  \begin{proof} For the three-body problem, it seems that most of this result follows from Chazy's work \cite{Chazy}.  But he does not use our definition of parabolic.  For completeness we will give a proof here using ideas from \cite{MarchalSaari, Chenciner2}. 
 
 For a parabolic motion,  the kinetic energy
 $K(v)\into 0$ as $t\into\infty$.  The energy equation 
 $K(v) -U(q)=h$ and the fact that $U(q)>0$ imply that $h\le 0$.  To rule out the case $h<0$ we use the Lagrange-Jacobi identity
 $\ddot I(t) =  2K+ 2h$.  If $K\into 0$ and $h<0$ then $\ddot I(t)$ has a negative upper bound for $t$ sufficiently large which forces $I(t)\into 0$ (total collapse) in finite time.  Such a solution would not  exist for large $t>0$.
 
 It is also easy to see that $r(t)\into\infty$ as $t\into\infty$.  Indeed the energy equation gives $r(t)K(v) = r(t)U(q) = U(s)$.  Now the normalized potential $U(s)$ has a positive lower bound depending only on the choice of masses, so $K(v)\into 0$ implies $r(t)\into\infty$.
 
The main theorem in Marchal and Saari \cite{MarchalSaari} describes the asymptotic behavior as $t\into\infty$ for any solution of the $n$-body problem which exists for all $t\ge 0$.  For any such solution, either $r(t)/t\into \infty$ or else all of the position vectors satisfy
 $q_k = A_k t+ O(t^\fr23)$ for some constant vectors $A_k$, possibly zero.  The second case implies that either
 $r(t)/t \into L$ for some $L>0$ or else $r(t) = O(t^\fr23)$ (the latter holding when all $A_k=0$).  We will show that in fact we have $r(t) = O(t^\fr23)$ for parabolic orbits.
 To see this note that given any $\e>0$ there is $t_0$ such that $\ddot I(t) = 2K < \e$ for $t\ge t_0$.  Then
 $I(t) \le I(t_0)+ \dot I(t_0) (t-t_0) + \fr12\e(t-t_0)^2$.  
If $r(t)/t\into L\in (0,\infty]$ then $I(t)/t^2 \into L^2\in (0,\infty]$
 and we have the contradiction that $0<L^2< \fr12 \e$ for all $\e>0$.

Next we show that the quantity $v(t)$ appearing in the blown-up
equations tends to a finite limit $v(t)\into \bar{v}>0$ as $t\into\infty$.  
Recall that $v(t)$ is non-decreasing since $v$ is a Liapanov function
on the zero energy surface.  Also 
$$v(t) = r^{-1}r'(t) = \sqrt{r(t)}\, \dot r(t) =  \fr23 \fr{d}{dt} r(t)^{\fr32}.$$
Since $r(t)\into\infty$ we have $v(t)>0$ for $t$ large so either $v(t)$ approaches some $\bar{v}>0$ or else $v(t)\into\infty$.  But integration gives
$$\fr1{t}\left( \fr23r(t)^{\fr32} -  \fr23r(0)^{\fr32}\right) = \fr{1}{t} \int_0^t v(s)\,ds.$$
If $v(t)\into \infty$ as $t\into\infty$ we would get $r(t)^{\fr32}/t\into\infty$ contradicting $r(t) = O(t^\fr23)$.
 
 Finally we can use the dynamics on the infinity manifold to finish the proof.  First note that the estimate $r(t) = O(t^\fr23)$ shows that the rescaled time $\tau$ with $\dot\tau(t) = r^\fr32(t)$ satisfies $\tau(t)\into\infty$ as $t\into\infty$, so a forward parabolic orbit exists for all 
 $\tau\ge 0$ and we have $u(\tau) = 1/r(\tau) \into 0$ as $\tau\into\infty$.  We claim that the $\omega$ limit set of our parabolic orbit consists of one of the restpoints in the manifold $\{u=0\}$.
 
Consider the subset $S = \{(u,s,z): u=0, H(s,z)=0, v=\bar{v}\}$ where $\bar{v}$ is the limit of $v(\tau)$ for a certain parabolic solution. We will show that this solution avoids a neighborhood of the double collision singularities in $S$.  Since the solution exists for all $t\ge 0$ and $\tau\ge 0$ it does not actually have a double collision, but we want to avoid a whole neighborhood.  Let $w= s' = z-vs$, the component of $z$ tangent to the ellipsoid $\mathcal{E}$.  The energy equation can be written  $\fr12 v^2 +\fr12 |w|^2 = U(s)$.  Since $U(s)\into \infty$ at collision while $v^2$ is bounded near $S$, it follows that $|w|$ is large near collision.
Now $v' =\fr12(\massmetric{z}{z} -v^2)  = \fr12|w|^2$ while the arclength $\sigma$ in $\mathcal{E}$ satisfies $\sigma' = |s'| = |w|$.  Hence the rate of change of $v$ with respect to arclength is $\fr12 |w|$.  From this we see that there is some neighborhood $\mathcal{U}$ of the double collision singularities in $S$ such that any initial condition in $\mathcal{U}$ crosses into the set $v>\bar{v}$.  So our solution avoids $\mathcal{U}$.

We conclude that our parabolic solution must converge to the compact set $S' =S\setminus \mathcal{U}$ as $\tau\into\infty$.  Therefore it has a nonempty, compact $\omega$ limit set contained in $S'$.  For orbits in the limit set we must have $v'(\tau) = \fr12 |w(\tau)|^2 = 0$ for all $\tau$ and this happens only at the restpoints, that is, at the points $(u,s,z) = (0,c,vc)$ where $c$ is a central configuration.    For the three-body problem, the restpoints form five circles and the eigenvalue computations show that these are normally hyperbolic invariant manifolds.  It follows that the omega limit set consists of just one of the restpoints.
 
The converse is easier.  Assume that $r(\tau)\into\infty$ and $(s(\tau),z(\tau))\into (c,\bar{v}c)$ where $\bar{v}=\sqrt{2U(c)}$.   Since
$K(z) = \sqrt{r}K(v)\into U(s)$ we have $K(v)\into 0$ as $\tau\into\infty$.  Inverting the change of timescale we find that $t\into\infty$ as $\tau\into\infty$ so the solution is parabolic.
  
 \end{proof}

\section{ Proof of Lemma \ref{main-lemma}}
\label{sec:pfLemma1}
In this section we revert to the classical variables and timescale but will make use of the eigenvalue computation of proposition~\ref{prop_Eulereevals}.

\subsection{ Proof of part (A) of  Lemma \ref{main-lemma} }
\label{sec:1A}

.
\begin{proof}

Consider perturbations $\ga^\ep$ of a  homothetic parabolic motion $\gamma_c$ associated to  a central configuration $c$ 
as in \eqref{ccr}, so $r(c) =1$, $\ga_c(t)=\rho(t)c$ where 
$\rho(t)=\Big(\frac 92 U(c)\Big)^\frac 13 t^{\frac 23}$.
For any $[a,b]\subset \R^+$, $v\in C^2([a,b],\E)$ such that
$v(a)=v(b)=0$,  $\langle c, v(t) \rangle_m =0$, we consider the variation of $\ga_c$
of the form
\begin{equation}
\ga^\ep(t)=\rho(t)(c+\ep v(t))  
\label{variations}
\end{equation}
so that
\[\frac{d\ga^\ep(t)}{d\ep}=\rho(t)v(t),\, 
\frac{d^2\ga^\ep(t)}{d\ep^2}\Big|_{\ep=0}=0\]
\begin{align*}
  A(\ga^\ep;a,b)&=\frac{1}{2}\int_a^b \langle \dot\ga^\ep(t), \dot\ga^\ep(t) \rangle_m
+\int_a^b U(\ga^\ep(t))\,dt
\end{align*}
\begin{align*}
  \frac{dA(\ga^\ep;a,b)}{d\ep}&=
\int_a^b \langle \dot\ga^\ep(t), \frac{d\dot\ga^\ep(t)}{d\ep}  \rangle_m\,dt+
\int_a^b  \langle \nablam U(\ga^\ep(t)),  \frac{d\ga^\ep(t)}{d\ep} \rangle_m\,dt.\\
\end{align*}
We have
\begin{align*}
  \frac{d^2A(\ga^\ep;a,b)}{d\ep^2}\Big|_{\ep=0}
&=\int_a^b\Big[
\langle \frac{d\dot\ga^\ep(t)}{d\ep} , \frac{d\dot\ga^\ep(t)}{d\ep} \rangle _m+
\langle \frac{d\ga^\ep(t)}{d\ep},  D\nablam U(\ga_c(t))\frac{d\ga^\ep(t)}{d\ep} \rangle_m \Big]_{\ep=0} dt\\
\notag =\int_a^b&\Big[\rho(t)^2 \langle \dot v(t),  \dot v(t) \rangle_m+
2\rho(t)\dot\rho(t) \langle v(t), \dot v(t) \rangle_m +\dot\rho(t)^2 \langle  v(t),  v(t) \rangle_m \Big]dt\\
&+\int_a^b\rho(t)^{-1} \langle  v(t),  D\nablam U(c)v(t) \rangle_m \,dt
\end{align*}
where in the last line we used that $D\nablam U$ is homogeneous of degree $-3$. 
We remark that the quantity 
 $\langle  v,  D\nablam U(c)v  \rangle_m$ occuring in the last term is
just the Hessian of $U$ at $c$, evaluated at the vector $v$.  
See the remark at the end of this subsection.
Integrate by parts and use that $\ddot \rho = -U(c)/\rho^2$ to get 
\begin{align*}
  \int_a^b\Big[ 2\rho(t)\dot\rho(t) \langle v(t)\dot v(t) \rangle_m+ \dot\rho(t)^2 \langle   v(t),   v(t) \rangle_m \Big]dt 
  &=\int_a^b\dot\rho(t) \frac{d}{dt}(\rho(t) \langle v(t), v(t) \rangle_m)dt\\
=-\int_a^b\ddot\rho(t)\rho(t) \langle v(t), v(t) \rangle_m dt
&=\int_a^b \rho(t)^{-1}U(c) \langle v(t), v(t) \rangle_mdt,
\end{align*}
so that
\begin{align}
\label{hessA}
  \frac{d^2A(\ga^\ep;a,b)}{d\ep^2}\Big|_{\ep=0}
&=\int_a^b\rho(t)^2  \langle\dot v(t), \dot v(t)\rangle_m \,dt
+\int_a^b
\rho(t)^{-1}[ \langle v(t), D\tilde\nabla U(c)v(t) \rangle_m ]\,dt.
\end{align}
where $D\tilde\nabla U(c) = D\nabla U(c) + U(c)I$.  
This is exactly the quantity (\ref{eq:dgradtilde}) which occurred in the computation of the eigenvalues in the last section.

 Now recall from proposition~\ref{prop_Eulereevals} that $D\tilde\nabla U(c)$ has an eigenvector, say $\delta s =z$ with a negative eigenvalue
 $\a_1= -U(c)\nu$.   Take  $v(t)=\fui(t)z$ for the variation of equation (\ref{variations})
where $\fui\in C^2([a,b])$ with $\fui(a)=\fui(b)=0$. Plugging into equation~(\ref{hessA})  we find that
\begin{equation}
\label{eq:Q}\frac{d^2A(\ga^\ep;a,b)}{d\ep^2}\Big|_{\ep=0} = Q(\fui;a,b)=
\int_a^b\Big[\rho(t)^2\dot\fui(t)^2+\a_1\rho(t)^{-1}\fui(t)^2\Big]\,dt.
\end{equation}
This quadratic form in $\fui$ is positive definite i.e. $Q(\fui;a,b)\ge 0$, 
for any $[a,b]\subset (0,\infty)$ if and only if its   Euler-Lagrange equation
\[ (\rho(t)^2 y')'-\a_1 \rho(t)^{-1}y=0\]  
is disconjugate on $(0,\infty)$ (\cite{Hartman} Section XI.6).
Plug in the expressions for $\rho$ and $\a_1$ to  find that this Euler-Lagrange equation reads
\begin{equation}\label{eulerdf}
 t^2y''+\frac 43 ty'+\frac{2}{9}\nu y=0 
\end{equation}
which has   solutions $t^r$ where   $r$ a root of the indicial equation 
$r^2+\dfrac 13 r+\dfrac{2}{9}\nu = 0$.    Equation \eqref{eulerdf}
fails to be disconjugate
if and only if $r$ has an imaginary part,  which is to say
iff and only if the discriminant, $\Delta = \dfrac{1-8\nu}{9}$ is negative.  
$\Delta$  is negative if and only if 
$\nu>\fr18$
  in which case  the solutions of \eqref{eulerdf} are
  \[y(t)=At^{-1/6}\cos(a\ln t)+Bt^{-1/6}\sin(a\ln t);\quad
a^2= \frac{1}{4} |\Delta| = \frac{1}{36}(8\nu-1 ).\]
which   has in fact infinitely many conjugate points on $(0, \infty)$. 

To finish the proof of part (A) just note that our instability condition $\nu>\fr18$
is precisely the condition for spiraling at the Eulerian restpoint.

\end{proof}

{\sc Remark on Hessians.} Some words are in order regarding the term $D\tilde\nabla U$
occurring in the formula and its relation to the Hessian $D^2 U$.
 If $f$ is any smooth function on a real vector space $\E$
then its Hessian $D^2 f(p)$ at $p \in \E$ is the  coordinate independent bilinear symmetric form
defined by $\frac{d^2}{d \e^2} f(p + \epsilon v)|_{\epsilon=0} = (D^2 f (p)) (v,v)$. If $\langle \cdot, \cdot \rangle$
is any inner product on $\E$, then $D^2 f  (p) (v,v) = \langle D \nabla f (p) (v), v \rangle$ where
$\nabla f$ is the gradient of $f$ with respect to the inner product, whereas   the derivative $D$ of $D \nabla f$ is the usual 
derivative, or Jacobian, of  vector fields $X$ on a vector space, given by  $D X (p) (v) = \frac{d}{d \epsilon}|_{\epsilon=0} X(p+ \epsilon v)$,
and  it is independent of the  inner product.

\subsection{Proof of Part (B) of Lemma \ref{main-lemma}.}

\begin{proof} 

We now consider a parabolic motion asymptotic to an Euler central configuration $c$.
Shifting the origin of time, if necessary, such a solution can be written as  $\a_0(t)=\rho(t)(c+\be(t))$, $t\ge t_0>0$,
where $\rho(t) c$ is the parabolic homothetic solution and $\beta(t)=O(t^{-d})$ 
as $t \to \infty$ for some $d>0$.  

A  bit of explanation is in order regarding $\beta$'s   rate of convergence to zero.  
If we  represent  $\a_0$ in McGehee (spherical) coordinates we get 
  $\a_0 (t) = r(t) s(t)$ with $s(t)$ normalized, $s(t) \to c$ as $t \to \infty$. We are also supposed to parameterize the curve  using $\tau$ instead of $t$. 
The stable manifold theorem applied
to the equilibrium point for $c$ yields $|s(\tau) -c| \le A e^{-\mu \tau}$ for $\tau$ sufficiently large,  where $\mu >0 $ is any number such that $-\mu$ is  greater than
all of  the negative eigenvalues from proposition~\ref{prop_evals} applied to $c$.  
 Our  two  representations of $\a_0$ are related by  $r(t) = \rho(t) \sqrt{ 1 + |\beta(t)|^2}$,
$s(t) = (c + \beta(t))/ \sqrt{ 1 + |\beta(t)|^2}$. 
  Integrating the relation
$r^{3/2}d \tau =dt$ we have that $t = A e^{3 v \tau/2} +\eta$
where $v=\sqrt{2U(c)}$ and where $\eta$ is exponentially small relative to the first term. Since $e^{-\mu \tau} \sim t^{-\frac{2}{3} \frac{\mu}{v}}$  it follows that $\beta (t) \to 0$
at a rate $t^{-d}$ with $d = 2 \mu/3 v$.  

Now take a variation $\a_\ep(t)=\rho(t)(c+\be(t)+\ep v(t))$ 
with $v(t)=\fui(t)z$ as before.
So \[\frac{d\a_\ep(t)}{d\ep}=\rho(t)v(t),\, 
\frac{d^2\a_\ep(t)}{d\ep^2}\Big|_{\ep=0}=0\]
\begin{align*}
\frac{d^2A(\a_\ep;a,b)}{d\ep^2}\Big|_{\ep=0}
&=\int_a^b\rho(t)^2\langle\dot v(t),\dot v(t)\rangle_m,dt\\
&+\int_a^b\rho(t)^{-1}
[U(c) \langle v(t),v(t)\rangle_m+D^2U(c+\be(t))(v(t), v(t))]\,dt\\
&=Q(\fui;a,b)+ (D^2U(c+\be(t))-D^2U(c))(z,z)\fui(t)^2
\end{align*}
with $Q$ as per equation~(\ref{eq:Q}).  (We have written the Hessian of $U$  in the form $D^2 U$ as per the remark above on Hessians,
rather than in the $D \nabla_m U$ form.)  According to Part (A) there are $[a,b]\subset (0, \infty)$, $\fui_1\in C^2([a,b])$ such that 
$\fui_1(a)=\fui_1(b)=0$ and $Q(\fui_1;a,b)<0$.

Defining $\fui_\l(t)=\fui_1(\frac t\l)$ we have 
$\dot\fui_\l(t)=\l^{-2}\dot\fui_1(\frac t\l)$
\[Q(\fui_\l;\l a,\l b)=\int_{\l a}^{\l b} \l^{-\frac 23}
\Big[\rho(\tfrac t\l)^2 \dot\fui_1(\tfrac t\l)^2-
\mu\rho(\tfrac t\l)^{-1}\fui_1(\tfrac t\l)^2\Big]dt
=\l^\frac 13 Q(\fui_1;a,b)\]
For $\l$ sufficiently large and $t\ge\l a$ we have that
$|\be(t)|\le C_1 t^{-d}$  and  so
$\|D^2U(c+\be(t))-D^2U(c)\|\le C_2 t^{-d}$. Thus
\begin{align*}
  \int_{\l a}^{\l b}\|D^2U(c+\be(t))-D^2U(c)\|\frac{\fui_\l(t)^2}{\rho(t)}dt\le
C_2\int_{\l a}^{\l b} \frac{\fui_\l(t)^2}{\rho(t)t^d}dt=C_2\l^{\frac 13-d}\int_a^b\frac{\fui_1(s)^2}{\rho(s)s^d}ds.
\end{align*}
Using $v(t)=\fui_\l z$ we have
\[\frac{d^2A(\a_\ep;\l a,\l b)}{d\ep^2}\Big|_{\ep=0}\le \l^\frac 13\left(Q(\fui_1;a,b)+
C_2\l^{-d}\int_a^b\frac{\fui_1(s)^2}{\rho(s)s^d}ds\right)<0.\]
for $\l$ sufficiently large. 
\end{proof}

\section{Symplectic Structure,  Lagrangian submanifolds, and Proofs of lemma ~\ref{Lag_thm} and theorem ~\ref{converse}. }
\label{sec:Lag}
The differential equations of the three-body problem preserve the standard symplectic structure on $\R^{12}$
$$\omega = m_1 dq_1\wedge dv_1 + m_2 dq_2\wedge dv_2+ m_3 dq_3\wedge dv_3.$$
Here, as usual, the wedge of vectors of one forms means adding the componentwise wedges so, for example, 
$(dx,dy)\wedge (du,dv) = dx\wedge du + dy\wedge dv.$
The restriction of the flow to $X =\E\times \E $ preserves the restriction of $\w$.  The pullback of $\w$ under the change of variables
$q_i=r\,s_i, v_i = r^{-\fr12}z_i$ is
$$\Omega_r = \sum_{i} m_i\left(r^{\fr12}\,ds_i\wedge dz_i + r^{-\fr12}dr\wedge s_i\cdot dz_i +\fr12 r^{-\fr12} dr\wedge z_i\cdot ds_i\right).$$
If we use $u = 1/r$ instead we get
$$\Omega_u = \sum_{i} m_i\left(u^{-\fr12}\,ds_i\wedge dz_i + u^{-\fr32} s_i\cdot dz_i \wedge du +\fr12 u^{-\fr32}  z_i\cdot ds_i\wedge du\right).$$
In both cases we restrict to the eight dimensional subset $X$ of $\R^{13}$ where $r>0, u>0$ and where the normalizations (\ref{eq:normalize}) hold.
\begin{lemma}\label{lemma_symplectic}
Let  $p(\tau)$ be any solution of the blown-up differential equations with $r(\tau)>0, u(\tau)>0$ and let
 vectorfields $a(\tau), b(\tau)$ be solutions of the variational equations along $p(\tau)$ which are tangent to an energy manifold.  Then $\Omega_r(p(\tau))(a(\tau),b(\tau))$ and
 $\Omega_u(p(\tau))(a(\tau),b(\tau))$ are constant.
\end{lemma}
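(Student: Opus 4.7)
My approach is to exploit the fact that $\Omega_r$ is the pullback of the classical symplectic form $\omega$ under the blow-up diffeomorphism $\Phi\colon (r,s,z)\mapsto (q,v)=(rs,\,r^{-1/2}z)$, defined on the open set $\{r>0\}$. The computation displayed in the paper immediately before the lemma is precisely the verification that $\Omega_r = \Phi^{*}\omega$, and the substitution $r=1/u$, $dr = -u^{-2}du$ in $\Omega_r$ recovers $\Omega_u$, so in fact $\Omega_u$ and $\Omega_r$ are the same 2-form written in two charts. Because the classical flow $\phi_t$ on $(q,v)$-space preserves $\omega$, its conjugate flow in blown-up coordinates with the \emph{same} time $t$ preserves $\Omega_r$; the generator $\tilde X := \Phi^{*}X_H$ of that conjugate flow will therefore satisfy $\mathcal L_{\tilde X}\Omega_r = 0$ and $\iota_{\tilde X}\Omega_r = -d\tilde H$, where $\tilde H := H\circ \Phi$.

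The $\tau$-flow of the paper comes from the time reparametrization $d\tau/dt = r^{-3/2}$, hence is generated by $Y = r^{3/2}\tilde X$. I will apply the product rule $\mathcal L_{fX}\alpha = f\,\mathcal L_X\alpha + df\wedge\iota_X\alpha$ to obtain
\[
\mathcal L_Y\Omega_r \;=\; r^{3/2}\mathcal L_{\tilde X}\Omega_r + d(r^{3/2})\wedge \iota_{\tilde X}\Omega_r \;=\; -d(r^{3/2})\wedge d\tilde H.
\]
This 2-form is not identically zero -- time reparametrization of a Hamiltonian flow destroys symplecticity off the energy levels. That failure is exactly what makes the tangency hypothesis of the lemma essential.

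To conclude, I will invoke that hypothesis: for vectors $a,b$ tangent to an energy manifold $\{\tilde H = h\}$ one has $d\tilde H(a) = d\tilde H(b) = 0$, so $\bigl(d(r^{3/2})\wedge d\tilde H\bigr)(a,b) = 0$. Combining this with the standard identity
\[
\tfrac{d}{d\tau}\,\Omega_r\bigl(a(\tau),b(\tau)\bigr) \;=\; (\mathcal L_Y\Omega_r)\bigl(a(\tau),b(\tau)\bigr),
\]
which is valid whenever $a(\tau),b(\tau)$ solve the variational equation along $p(\tau)$ (so they are the push-forward of their initial values by the flow of $Y$), yields the desired constancy of $\Omega_r(a(\tau),b(\tau))$. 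The statement for $\Omega_u$ follows at once since $\Omega_u$ and $\Omega_r$ coincide as 2-forms on $\{0<r<\infty\} = \{0<u<\infty\}$. The only conceptual hurdle I anticipate is resisting the temptation to try to prove $\mathcal L_Y\Omega_r = 0$ outright -- it is not true, and the tangency to an energy level is doing genuine work, exactly compensating for the position-dependent rescaling factor $r^{3/2}$.
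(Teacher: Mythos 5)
Your proposal is correct and follows essentially the same route as the paper: the authors likewise note that the unrescaled vector field $\eta=r^{-3/2}\xi$ preserves $\Omega_r$ because it is the pullback of the Hamiltonian field, compute $L_\xi\Omega_r = df\wedge dH$ with $f=r^{3/2}$ via Cartan's formula (your Lie-derivative product rule is the same computation), and conclude by evaluating on vectors annihilated by $dH$. The only differences are an immaterial sign convention in $\iota_{\tilde X}\Omega_r=\pm d\tilde H$ and your explicit observation that $\Omega_u$ and $\Omega_r$ are the same form in two charts, which the paper leaves implicit.
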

\begin{proof}
Let $\xi$ denote the vectorfield on $\R^{13}$ given by \eqref{eq:mcgeheer},
\eqref{eq:mcgehees}, \eqref{eq:mcgeheez}.  Let $\eta = r^{-\fr23}\xi$ be the same vectorfield without the change of timescale.  Since $\eta$ is the pullback of the Hamiltonian field, it preserves the pullback form $\Omega_r$.  In other words, the Lie derivative
$$L_\eta\Omega_r = \fr{d}{dt}\phi_t^* \Omega_r|_{t=0} = 0.$$
Since $\xi=  f \eta$, where $f=r^{\fr32}$, Cartan's formula gives
$$L_\xi \Omega_r = d(\iota_\xi \Omega_r) + \iota_\xi d\Omega_r = d(f\,\iota_\eta \Omega_r)+0=d(fdH)= df\wedge dH.$$
Here we used the fact that $\iota_\eta \Omega_r = dH$ which is the pullback of the differential form version of Hamilton's equations.  

If $p,a, b$ are as in the statement of the lemma then
$$\fr{d}{d\tau}\Omega_r(p)(a,b) = L_\xi \Omega_r(p)(a,b) =(df\wedge dH)(p)(a,b)=0$$
since $dH(p)(a))=dH(p)(b)=0$.
\end{proof}

Using this lemma we can prove Lemma~\ref{Lag_thm}

\begin{prop}\label{prop_LagrangeLagrange}
Let $l=(u,s,z) = (0, s, v s)$ be one of the Lagrange restpoints at infinity with $v>0$ and let $W^s_+(l)$ denote the part of the stable manifold with $u>0$.  Then $W^s_+(l)$ is a four-dimensional invariant manifold which is a Lagrangian submanifold of $X$.  Similarly, at the restpoints with $v<0$ the unstable manifold $W^u_+(l)$ is Lagrangian.
\end{prop}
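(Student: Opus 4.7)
The plan is to show $\Omega_u$ vanishes identically on $W^s_+(l)$: since Lagrangian requires half-dimensional isotropy, this will finish the proof. First, the dimension count. Applying Lemma~\ref{lemma_lambda} to each of the three eigenvectors of $D\tilde\nabla U(c)$ restricted to $T_c\mathcal{E}$---the rotational direction $c^\perp$ with eigenvalue $\alpha=0$ and the two nontrivial directions $\delta s^j$ with $\alpha_j>0$ (Proposition~\ref{prop_Lagrangeevals})---together with the two extra eigenvectors $(1,0,0)$ and $(0,0,s)$, produces the eight eigenvalues on $T_l X$: $-v,\,v,\,0,\,-v/2$ and $\lambda_\pm^j = \tfrac14(-v\pm\sqrt{v^2+16\alpha_j})$. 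The $v$-eigendirection is transverse to $\{H=0\}$, which contains $W^s_+(l)$; restricting to zero energy leaves four negative eigenvalues $-v,\,-v/2,\,\lambda_-^1,\,\lambda_-^2$ and two positive $\lambda_+^1,\,\lambda_+^2$, plus a center eigenvalue $0$ from rotation. Hence $\dim W^s_+(l)=4=\tfrac12\dim X$, and $T_l W^s_+(l)$ is precisely the stable eigenspace $E^s$.

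For isotropy, fix $p_0\in W^s_+(l)$ and $a,b\in T_{p_0}W^s_+(l)$; set $p(\tau)=\phi_\tau(p_0)$, $a(\tau)=d\phi_\tau(a)$, $b(\tau)=d\phi_\tau(b)$. Since $W^s_+(l)\subset\{H=0\}$, the vectors $a(\tau),b(\tau)$ are tangent to zero energy, so by Lemma~\ref{lemma_symplectic} the quantity $C:=\Omega_u(p(\tau))(a(\tau),b(\tau))$ is constant in $\tau$. I will show $C=0$ by computing $\lim_{\tau\to\infty}C$, using the stable manifold theorem to decompose the pushforwards into components along the four stable eigenvectors at $l$: (a) $(1,0,0)$ at rate $e^{-v\tau}$; (b) $(0,c^\perp,\tfrac{v}{2}c^\perp)$ at rate $e^{-v\tau/2}$; (c),(d) $(0,\delta s^j,k_-^j\delta s^j)$ at rate $e^{\lambda_-^j\tau}$ with $\lambda_-^j<-v/2$. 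Two identities at $l$ force the pairings to vanish: the constraint $\langle c,\delta s\rangle_m=0$ for any $\delta s\in T_c\mathcal{E}$, and the mass-metric orthogonality $\langle c^\perp,\delta s^j\rangle_m=\langle\delta s^1,\delta s^2\rangle_m=0$ coming from the symmetry of $D\tilde\nabla U(c)$ at distinct eigenvalues.

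For pairs neither of which is type (a), only the $u^{-1/2}\,ds\wedge dz$ term of $\Omega_u$ contributes, which upon substituting $\delta z=k\delta s$ becomes $u^{-1/2}(k_b-k_a)\langle a^s,b^s\rangle_m$; this vanishes either by $k_a=k_b$ or by orthogonality. For a pair with exactly one type (a) vector, the $u^{-1/2}$ term is zero and the two $u^{-3/2}$ terms consolidate to $-u^{-3/2}\delta u^a\bigl[\langle s,\delta z^b\rangle_m+\tfrac12\langle z,\delta s^b\rangle_m\bigr]$; along the flow, $u^{-3/2}\sim e^{3v\tau/2}$, $\delta u^a\sim e^{-v\tau}$, and $\delta s^b,\delta z^b\sim e^{-v\tau/2}$ balance to give a bounded prefactor, while the bracket vanishes at $l$ by $\langle c,\delta s\rangle_m=0$ and then decays like $e^{-v\tau/2}$ along the trajectory as $s(\tau)\to c$, $z(\tau)\to vc$. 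All remaining pairs decay strictly by exponent counting. Thus $C=0$, so $\Omega_u|_{W^s_+(l)}\equiv 0$ and $W^s_+(l)$ is Lagrangian. The assertion for $W^u_+(l)$ at restpoints with $v<0$ follows by the same argument applied to the time-reversed flow, which exchanges stable and unstable manifolds while preserving $\Omega_u$ up to sign along zero-energy orbits. The main technical delicacy is the marginal (a)--(b) pair, whose exponent balance is zero rather than strictly negative; convergence there hinges on the algebraic identity $\langle c,c^\perp\rangle_m=0$ holding exactly at $l$, amplified by the exponential approach $s(\tau),z(\tau)\to c,vc$.
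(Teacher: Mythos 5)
Your proposal is correct and follows the same overall strategy as the paper: the eigenvalue count from Propositions~\ref{prop_evals} and \ref{prop_Lagrangeevals} gives the four-dimensional stable manifold inside $\{H=0\}$, and Lemma~\ref{lemma_symplectic} reduces isotropy to showing $\Omega_u(p(\tau))(a(\tau),b(\tau))\to 0$ as $\tau\to\infty$. The genuine difference is in how that limit is established, and here your finer analysis earns its keep. The paper argues by a blanket estimate: it asserts that all $(\d s,\d z)$ components of stable variational solutions decay at the rate of $\l_w=\frac{-v}{4}(1+\sqrt{13-12\sqrt{k}})<-\frac{v}{2}$, so that every term of $\Omega_u$ is bounded by $\exp((\frac{v}{2}+\l_w)\tau)\to 0$. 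But the rotational stable direction $(0,s^\perp,\frac{v}{2}s^\perp)$, which must be counted to reach dimension four, has eigenvalue exactly $-\frac{v}{2}$ (Lemma~\ref{lemma_lambda} with $\a=0$ gives $\l_-=-\frac{v}{2}$; the paper's listing of this eigenvalue as $-v$ appears to be a slip). For a pairing of this direction with the $\d u$-direction of eigenvalue $-v$, pure exponent counting in the $u^{-3/2}$ terms gives only $e^{(3v/2-v-v/2)\tau}=O(1)$, not decay. You correctly identify this marginal pair and close it with the algebraic identity $\langle c,c^\perp\rangle_m=0$ (and $z=vc$), which makes the offending bracket vanish at the restpoint and hence decay along the orbit. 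So your argument supplies a step the paper's proof elides. The one place you are slightly glib is the claim that ``all remaining pairs decay strictly by exponent counting'': the non-rotational stable vectors also acquire $\d u$-components of order $e^{-v\tau}$ from the equation $\d u'=-v\,\d u-u\,\d v$, so a pairing of the rotational vector's $\d z$-component against such a $\d u$-component again balances to exponent zero and needs the same orthogonality $\langle c,c^\perp\rangle_m=0$ to close; the mechanism is identical to your (a)--(b) case, so this is a gap in exposition rather than in substance. Your reduction of the $v<0$ case to time reversal is fine and matches the paper's implicit treatment.
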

\begin{proof}
Propositions~\ref{prop_evals} and \ref{prop_Lagrangeevals} give the eight eigenvalues of the variational equations at $l$.  For positive masses, the quantity $k$ from proposition~\ref{prop_Lagrangeevals} satisfies $0\le k <1$.  It follows that if $v>0$ then there are three positive eigenvalues 
$$v, \fr{-v}{4}\left( 1 - \sqrt{13\pm 12\sqrt{k}}\right)$$
four negative eigenvalues
$$-v, -v, \fr{-v}{4}\left( 1 +\sqrt{13\pm 12\sqrt{k}}\right)$$
and one zero eigenvalue.  The latter is due to the rotational symmetry.  In fact $l$ is part of a circle of equilibria.  This circle is normally hyperbolic
(\cite{Hirsch}, p.1)  so each equilibrium has a four-dimensional stable manifold.

The first negative eigenvalue $-v$ has eigenvector $(\d u,\d s, \d z)= (1,0,0)$ and it follows that the stable manifold has an open subset $W^s_+(l)$ with $u>0$.  Moreover, the other three stable eigenvectors are in the subspace $\delta H=0$.  It follows that $W^s_+(l)$ is contained in the energy manifold $\{H=0\}$.  Using blown-up coordinates we need to show that the two-form $\Omega_u$ vanishes on tangent vectors to $W^s_+(l)$.  Let $a_0, b_0$ be two tangent vectors to $W^s_+(l)$ at a point $p_0 \in W^s_+(l)$.  To show that $\Omega_u(p_0)(a_0,b_0)=0$ it suffices, by 
lemma~\ref{lemma_symplectic}, to show that $\Omega_u(p(\tau))(a(\tau),b(\tau))\rightarrow 0$ as $\tau\into\infty$.  For this we need estimates on the exponential decay of $u$ and the components of $a,b$.

Since $u' = -v u$ and $v(\tau)$ converges exponentially to the value $v$ at the restpoint we have a lower bound
$u(\tau) \ge c \exp(-v\tau)$ for some constant $c>0$ which depends on the particular solution $p(\tau)$ under consideration.
To see this note that
$$u(\tau) \exp( v\tau) = u(0)\exp( \int_0^\tau (v-v(s))\,ds)$$
and the integral is bounded above and below since the integrand tends to $0$ exponentially.  
Since $u(0)>0$ we get a positive lower bound $c$ as required.   The lower bound on $u$ gives 
upper bounds
$$u^{-\fr12} \le c_1\exp(\fr12 v\tau)\qquad u^{-\fr32} \le c_2\exp(\fr32 v\tau)$$
for the coefficients in $\Omega_u$.

A similar argument applies to  the variational differential equations.  We have $\d u' = -v(\tau)\d u$.  Since $v(\tau)\rightarrow v$ exponentially, we have an estimate  of the form $|\d u(\tau)| \le c_3 \exp(- v\tau)$ for every solution of the variational equations.   
Since $p(\tau)\rightarrow l$ exponentailly, the other components $\d s, \d z$ also decay at a rate governed by the eigenvalues at $l$.  The weakest of the attracting eigenvalues is
$$\l_w =\fr{-v}{4}\left( 1 +\sqrt{13- 12\sqrt{k}}\right) < -\fr{v}{2}$$
so we will have upper bounds of the form
$$|\d s_i(\tau)| \le c_4 \exp(\l_w v\tau) \qquad  |\d z_i(\tau)| \le c_4 \exp(\l_w v\tau).$$
Substituting these estimates into the formula for $\Omega_u$ gives
$$\Omega_u(p(\tau))(a(\tau),b(\tau)) \le c_5 \exp((\fr{v}2 + \l_w)\tau) \rightarrow 0.$$
\end{proof}
\begin{proof}[Proof of Lemma~\ref{Lag_thm}]
 From lemma~\ref{lemma_lambda} we see that the stable space of a Lagrange respoint at infinity is generated by the
eigenvector $(1,0,0)$ and 3 eigenvectors $(0,\d s_\a,k_{\a-}\d s_\a)$ for 
eigenvectors $\d s_\a$ of the 3 eigenvalues $\af$ of $D\tilde\nabla U(s)$.
Thus the projection of the stable space is the  whole tangent space
at $(0,c)$ of the configuration space.  
It follows from the implicit function theorem that  $W^s_+(l)$ is a graph near infinity. More precisely, there is a product neighborhood 
$V$ of $(0,c)$ in the blown-up configuration space $[0,\infty) \times S^3$ 
and a smooth map $(u,s) \mapsto y(u,s)$ from $V$ to the space $\E$ of blown up velocities
such that the graph of this map   coincides with the stable manifold
of $l$  in some neighborhood of $l$.  Now being Lagrangian does not make sense at  $u =0$
since the symplectic structure explodes, so in the statement of lemma 2, when we say that $W^s_+(l)$ is a ``Lagrangian graph"
we mean over $V \setminus \{u = 0\}$. 
\end{proof}

\begin{definition}  
\label{def:nbhdInfinity} By a   ``neighborhood of $c$ at infinity'' we mean a  neighborhood of the  form described in the end of the  proof 
immediately above.  
When expressed in   $\E$ such a  neighborhood is a truncated open cone  consisting of   those points  $q \in \E$ of the form $q= r s$ where
$| s | = 1$,  $|s-c| < \delta $ and $u = 1/r < \delta$.  
\end{definition}

\begin{cor}
 \label{Busemann}
 Let  $B_c$ be the Buseman function (definition  \ref{defBuseman},  
proposition \ref{Buseman})   
associated to a homothetic parabolic  Lagrange solution  $\ga_c$.
Then there is a   neighborhood of $c$ at infinity,  $V \subset \E$ (see above definition \ref{def:nbhdInfinity}) 
on which   $B_c$ is smooth
 and such  that    
\[\{(\ga(t),\dot\ga(t)^{*}):\ga\hbox{ a curve calibrated by } B_c, \ga(0)\in V\}
= W^s_+(l)\cap (V\times\E ^* ), \quad l=(0,c,vc).\]
(In this formula   $v^*$ denotes the Legendre transform of  the velocity $v$, which is its dual covector.)  
\end{cor}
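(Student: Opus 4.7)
The plan is to read off a smooth primitive $F$ on $V$ for the Lagrangian graph $W^s_+(l)$ provided by Lemma~\ref{Lag_thm}, then identify $B_c$ with $F$ up to a constant on $V$. Once $B_c$ is smooth with $dB_c$ parametrising $W^s_+(l) \cap (V \times \E^*)$, the claimed equality follows from flow invariance and the fact, supplied by Propositions~\ref{Buseman} and \ref{prop:parabolic}, that calibrating curves of $B_c$ are exactly the parabolic orbits in $W^s_+(l)$.

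First I set up the two-sided correspondence between $B_c$-calibrated curves and $W^s_+(l)$. By Proposition~\ref{Buseman}, through each $x \in V$ there passes a $B_c$-calibrated curve $\ga : [0,\infty) \to \E$ with $\ga(0) = x$ asymptotic to $\ga_c$. Such a $\ga$ is a free time minimizer of zero energy, and being asymptotic to $\ga_c$ it is parabolic; Proposition~\ref{prop:parabolic} then places the lifted trajectory $(\ga(t), \dot\ga(t)^*)$ inside $W^s_+(l)$. By Lemma~\ref{Lag_thm}, after shrinking $V$, $W^s_+(l)$ is a smooth embedded graph over $V$: there is a smooth covector field $y^* : V \to \E^*$ with
\[
W^s_+(l) \cap (V \times \E^*) = \{(x, y^*(x)) : x \in V\}.
\]
So the initial momentum of any calibrating curve through $x$ is forced to be $\dot\ga(0)^* = y^*(x)$, a smooth function of $x$. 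Since $W^s_+(l)$ is Lagrangian, $y^*$ is a closed 1-form on $V$; shrinking $V$ to be simply connected we write $y^* = dF$ for some smooth $F$ on $V$.

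To conclude that $B_c = F + \mathrm{const}$ on $V$, recall that $B_c$ is Lipschitz on compact sets away from collisions, being a uniform limit of differences of 1-Lipschitz JM-distance functions, hence differentiable almost everywhere on $V$. At any differentiability point $x \in V$, the domination inequality~\eqref{wKAM1} together with its saturation \eqref{wKAM2} along the calibrated curve $\ga$ through $x$ forces $dB_c(x)$ to equal the Legendre transform of $\dot\ga(0)$, namely $dB_c(x) = \dot\ga(0)^* = y^*(x) = dF(x)$. Thus $B_c - F$ has zero weak derivative on the connected set $V$ and, being Lipschitz, is constant there. Hence $B_c$ is smooth on $V$ with $dB_c = y^*$, and the graph of $dB_c$ over $V$ coincides with $W^s_+(l) \cap (V \times \E^*)$. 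Invariance of this Lagrangian manifold under the Hamiltonian flow, combined with Remark~2 of Section~\ref{subsec:weakKAM}, then yields the full graph equality: integral curves of the Lagrangian section project to calibrating curves of $B_c$, and every lifted calibrating trajectory over $V$ is accounted for.

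The principal obstacle is the step $dB_c = y^*$ at differentiability points. It rests on uniqueness of the calibrating curve through each $x \in V$, which in turn combines the graph structure of $W^s_+(l)$ with the earlier observation that every calibrating curve through $x \in V$ lies in $W^s_+(l)$. Given this uniqueness, the Legendre identification of $dB_c$ with the initial momentum is standard weak KAM bookkeeping, and the upgrade of $B_c$ from Lipschitz to smooth via its identification with $F$ is routine.
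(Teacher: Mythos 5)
Your proposal is correct and follows essentially the same route as the paper's proof: both use the Lagrangian graph structure of $W^s_+(l)$ over $V$ from Lemma~\ref{Lag_thm} to produce a primitive $f$ with $\graf df = W^s_+(l)\cap(V\times\E^*)$, place every $B_c$-calibrated curve starting in $V$ inside $W^s_+(l)$ via its asymptotics to $c$, and conclude $dB_c = df$, so that $B_c$ is smooth (up to an additive constant equal to $f$) and the two sets in the corollary coincide. Your almost-everywhere differentiability argument for upgrading $B_c$ from Lipschitz to smooth is just a slightly more explicit version of the paper's identification $dB_c(\a(t)) = df(\a(t))$ along calibrated curves, not a different method.
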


The reader may wish to consult Remark 2 following Definition \ref{wkam} for context here. 
\begin{proof}
 $W^s_+(l)$ is a Lagrangian graph near $c$ at infinity, and is the graph over some 
neighborhood $V \subset \E$ of infinity, as per the above  terminology.  
Thus, being Lagrangian  there is   a  differentiable function $f$
defined on  $V$ such that $W^s_+(l) \cap (V\times \E)=\graf df$. 
For any $x\in V$ there is a unique motion $\ga(t)$ with 
$\ga(0)=x$, $(x,\dot\ga(0) ^*)\in W^s_+(l)$ and it is given 
by the solution to $\dot\ga^*=df(\ga)$ with $\ga(0)=x$.
Here $w^*$ denotes the dual of $w$ with respect to the mass metric --
which is to say - the inverse Legendre transform of $w$ relative to our Lagrangian.

On the other hand for any $x\in V$ there is a $\a:[0,\infty)\to\E$ 
that  starts at $x$ and calibrates $B_c$. For $t>0$, $B_c$ is differentiable at $\a(t)$ and 
$dB_c(\a(t))=\dot\a (t) ^*$. Since $\a$ is asymptotic to $c$, we
have that $(\a(t),\dot\a (t)^*)\in W^s_+(l)$ so, 
by the graph description $\dot\a (t) ^*=df (\a(t))$.  Thus  
$dB_c(\a(t))=df(\a(t))$, proving the corollary.
(We also have that   $B_c=f+k $ on   $V$, $k$ a constant.)  
\end{proof}
\begin{proof}[Proof of Theorem~\ref{converse}]
Let $\gamma(t)$ tend parabolically to a Lagrange central configuration
$c$ and let  $V$ be the
neighborhood  of $c$ at infinity as in Corollary~\ref{Busemann} and lemma 2.  
Then, since  $(\ga(t), \dot \ga (t)^* )$
lies on the stable manifold $W = W_+^s(l)$ we must have that 
$\gamma([T, \infty)) \subset V$  for $T$ large enough.
Consequently for $t \ge T$ we have that  $\dot\ga(t)^{*}=dB_c(\ga(t))$. 
The curve $\a:[0,\infty)\to\E$ calibrated by
$B_c$ that starts at $\ga(T)$ is also a solution of the differential equation
$\dot z^*= dB_c(z)$. By uniqueness of solutions we have $\a(t)=\ga(t+T)$.
Then $\ga:[T,\infty )\to\E$ is calibrated by  $B_c$ and in particular
it is a  free time minimizer.
\end{proof}


\section{Appendix}
The appendix provides the proofs of the propositions about eigenvalues used above.

\begin{proof}[Proof of Proposition~\ref{prop_evals}]
The first two eigenvalues in the list are from  $(\d r, \d s, \d z) = (1,0,0)$ and $(0, 0, s)$.  The others come from the eigenvalues of $B$ found in the lemma.  The eigenvalues $-v, 0$  come from the eigenvector $\d s = s^\perp$ with $\a = 0$ and the other two come from the two nontrivial eigenvalues.

For the restpoints at infinity, the computation is the same except that the first eigenvalue on the list is now associated to $(\d u, \d s, \d z) = (1,0,0)$ and has eigenvalue $-v$ instead of $v$.
\end{proof}

To prove propositions~\ref{prop_Lagrangeevals} and \ref{prop_Eulereevals} we need to find the nontrivial eigenvalues $\a_1,\a_2$ of $D\tilde\nabla U(s)$ for the equilateral and collinear central configurations of the three-body problem.  These can be deduced from the work of Siegel but we will give a quick discussion here.  

It is straightforward to calculate the $6\times 6$ matrix $D\nabla U(s)$ with the result
\begin{equation}\label{eq:D2U}
D\nabla U(s)= \m{D_{11}&D_{12}&D_{13}\\
D_{21}&D_{22}&D_{23}\\
D_{31}&D_{32}&D_{23}
}
\end{equation}
where the $2\times 2$ blocks are
$$D_{ij}  = \fr{m_im_j}{r_{ij}^3}\left(I - 3u_{ij}u_{ij}^{t} \right), \quad u_{ij} = \fr{s_i-s_j}{r_{ij}}\qquad \text{ for }i\ne j$$
and 
$$D_{ii} = -\sum_{j\ne i}D_{ij}.$$

It is more convenient to work with the matrix
$$P = \fr{I(s)}{U(s)}M^{-1}D\nabla U(s).$$
Since $P$ is invariant under scaling and translation, it can be computed without imposing the normalizations ({\ref{eq:normalize}).
If $\b$ is an eigenvalue of $P$ then $\a = U(s)(\b +1)$ is an eigenvalue of $D\tilde\nabla U(s)$ for the corresponding normalized $s$.  So we are reduced to finding the nontrivial eigenvalues $\b_1,\b_2$ of $P$.

\begin{proof}[Proof of Proposition~\ref{prop_Lagrangeevals}]
Consider an equilateral triangle configuration $s$.  Working with $P$ we can use  the unnormalized configuration
$$s_1=(1,0) \qquad s_2=(-\fr12,\fr{\sqrt{3}}2)\qquad s_3=(-\fr12,-\fr{\sqrt{3}}2)$$ 
for which
$$U(s) =  \fr{m_1m_2+m_1m_3+m_2m_3}{\sqrt{3}}\qquad  I(s) = \fr{3(m_1m_2+m_1m_3+m_2m_3)}{m}.$$
Using these together with (\ref{eq:D2U}) gives
\[
P = 	
\frac{1}{4m}	\m{
			5(m_2+m_3)&3\sqrt{3}(m_3-m_2)&-5m_2&3\sqrt{3}m_2&-5m_3&-3\sqrt{3}m_3\\
			3\sqrt{3}(m_3-m_2)&-(m_2+m_3)&3\sqrt{3}m_2&m_2&-3\sqrt{3}m_3&m_3\\
			-5m_1&3\sqrt{3}m_1&5m_1-4m_3&-3\sqrt{3}m_1&4m_3&0\\
			3\sqrt{3}m_1&m_1&-3\sqrt{3}m_1&-m_1+8m_3&0&-8m_3\\
			-5m_1&-3\sqrt{3}m_1&4m_2&0&5m_1-4m_2&3\sqrt{3}m_1\\
			-3\sqrt{3}m_1&m_1&0&-8m_2&3\sqrt{3}m_1&-m_1+8m_2}
\]
One can guess 4 of the 6 eigenvalues of $P$.  If $e_1=(1,0), e_2= (0,1)$ then $(e_1,e_1,e_1)$ and $(e_2,e_2,e_2)$ are eigenvectors with eigenvalue $\b=0$.
Also $s, s^\perp$ are eigenvectors with eigenvalues $\b = 2,-1$ respectively.  Since the trace of $P$ is $2$, the remaining eigenvalues satisfy
$\b_1+\b_2 = 1$.
Alternatively, the numbers $\g_i=\b_i+1$ which we really want, satisfy $\g_1+\g_2 = 3$.
We can also find the product $\g_1\g_2$ as follows.  We have
$$(\trace P)^2-\trace P^2 = (1+\b_1+\b_2)^2 - (5+\b_1^2+\b_2^2) = 2\b_1\b_2-2 = 2\g_1\g_2-6.$$
With some computer assistance, this gives
$$\g_1\g_2= \fr{27(m_1m_2+m_1m_3+m_2m_3)}{4(m_1+m_2+m_3)^2}$$
Solving the quadratic equation $\g^2 -3\g +\g_1\g_2=0$ gives the  eigenvalues $\a_i =U(s)\g_i = v^2 \g_i/2$ of $D\tilde\nabla U(s)$ listed in the proposition.   Then we get the nontrivial eigenvalues $\lambda$ of  the equilibrium from proposition~\ref{prop_evals}.
\end{proof}

\begin{proof}[Proof of proposition~\ref{prop_Eulereevals}]
Consider a normalized collinear central configuration such that $s_i=(x_i,0)\in\R^2$.
Then the unit vectors $u_{ij} = (\pm 1,0)$ so the $2\times 2$ matrices $D_{ij}$ reduce to
$$D_{ij} = \fr{m_im_j}{r_{ij}^3}\m{-2&0 \\ 0&1}\qquad i\ne j.$$
Rearranging the variables as $q=(x_1,x_2, x_3, y_1,y_2, y_3)$ produces a block structure 
\begin{equation}\label{eq_block}
M^{-1}D\nabla U(s) = \m{2C&0 \\0&-C}
\end{equation}
where $C$ is the $n\times n$ matrix 
$$C= \m{\fr{m_2}{r_{12}^3}+\fr{m_3}{r_{13}^3}&-\fr{m_2}{r_{12}^3}&-\fr{m_3}{r_{13}^3} \\
-\fr{m_1}{r_{12}^3}&\fr{m_1}{r_{12}^3}+\fr{m_3}{r_{23}^3}&-\fr{m_3}{r_{23}^3} \\
-\fr{m_1}{r_{13}^3}&-\fr{m_2}{r_{23}^3} &\fr{m_1}{r_{13}^3}+\fr{m_2}{r_{23}^3}}.$$
An eigenvalue $\mu$ of $C$ determines two eigenvalues 
$$\a = -\mu+U(s), 2\mu +U(s)$$
 for
$D\tilde\nabla U(s) = M^{-1}D\nabla U(s) +U(s)I$.

It is possible to guess two eigenvectors of $C$.  First of all $v_1= (1,1,1)$ is an eigenvector with eigenvalue $0$.  Next, let $v_2= (x_1,x_2,x_3)$ be the vector of $x$-coordinates of the collinear central configuration.  Then it is easy to see that
$$C v_2 = -{M_0}^{-1}\nabla_x U$$
where $\nabla_x$ is the partial gradient with respect to the $x$-coordinates and $M_0=\diag(m_1,m_2,m_3)$.  Since $s$ is a normalized central configuration, we have $C v_2 = U(s) v_2$, so $v_2$ is also an eigenvector, with eigenvalue $U(s)$.   The remaining, nontrivial eigenvalue of $C$ can now be found as
$\mu = \tau -U(s)$ where $\tau=\trace(C)$, i.e., 
$$\tau= \left(\fr{m_1+m_2}{r_{12}^3} +\fr{m_1+m_3}{r_{13}^3} +\fr{m_2+m_3}{r_{23}^3}\right).$$
Therefore the nontrivial eigenvalues of  of $D\tilde\nabla U(s)$ are 
$$\a =  2\tau -U(s), 2U(s)-\tau.$$

To get the form shown in the proposition, let $\nu$ be the translation and scale invariant quantity
$$\nu = \fr{I(s)}{U(s)}\tau -2.$$
Then for the normalized configuration  $\a_1,\a_2 =  -U(s)\nu, U(s)(3 + 2\nu)$ and it remains to show that $\nu$ has the indicated form. 

We just indicate a computer assisted way to prove it.  Using the configuration $s_1=(0,0), s_2= (r,0),s_3=(1+r,0)$ we have
$$r_{12} =r \qquad r_{23} = 1 \qquad r_{13} = 1+r.$$
Substituting these into the formulas for $I(s), U(s), \tau$ expresses $\nu = \fr{I(s)}{U(s)}\tau -2$ as a rational function $\nu(r)$.  Subtracting the expression (\ref{eq_nu}) and factorizing the difference reveals that there is a factor of $g(r)$ in the numerator, where $g(r)$ is the fifth degree polynomial (\ref{eq:g}) giving the location of the central configuration.  So $\nu(r)$ is indeed given by (\ref{eq_nu}) at the central configuration.
\end{proof}

{\bf Acknowledgements}

{\it Montgomery   thankfully acknowledges support by NSF grant DMS-20030177.
Sanchez and Montgomery thankfully acknowledge support of UC-MEXUS grant CN-16-78

\end{document}